\tikzset{
    vertex/.style = {
        circle,
        draw,
        outer sep = 3pt,
        inner sep = 3pt,
    },edge/.style = {->,> = latex'}
}
\def\diag{\mathop{\rm diag}}
\def\Diag{\mathop{\rm Diag}}
\def\rank{\mathop{\rm rank}}
\def\span{\mathop{\rm span}}
\newcommand{\rr}{\mathbb{R}}
\newcommand{\del}{\Delta(L^\dag)}
\newcommand{\1}{\mathbf{1}}
\newcommand{\g}{\mathcal{G}}
\def\c{\mathop{\rm col}}
\def\a{\alpha}
\def\b{\beta}
\def\col{{\col}}
\def\det{{\rm det}}
\def\min{{\rm min}}
\def\nulls {{\rm null}}
\def\S{{\widetilde{S}}}
\def\L{\widetilde{L}}
\def\W{\widetilde{W}}
\def\E{\mathcal{E}}
\def\G{\widetilde{G}}
\def\z{\mathbf{Z}}
\newtheorem{theorem}{Theorem}
\newtheorem{example}{Example}
\newtheorem{lemma}{Lemma}
\newtheorem{definition}{Definition}
\newtheorem{proposition}{Proposition}
\begin{document}
\begin{center}
\begin{large}
On resistance matrices of weighted balanced digraphs
\end{large}
\end{center}
\begin{center}
R. Balaji, R.B. Bapat and Shivani Goel \\
\today
\end{center}

\begin{abstract}
Let $G$ be a connected graph with $V(G)=\{1,\dotsc,n\}$.
Then the resistance distance between any two vertices $i$ and $j$ is given 
by $r_{ij}:=l_{ii}^{\dag} + l_{jj}^{\dag}-2 l_{ij}^{\dag}$,
where $l_{ij}^\dag$ is the $(i,j)^{\rm th}$ entry of the
Moore-Penrose inverse of the Laplacian matrix of $G$.
 For the resistance matrix 
$R:=[r_{ij}]$, there is an elegant formula to compute the inverse of $R$. This says that
\[R^{-1}=-\frac{1}{2}L + \frac{1}{\tau' R \tau} \tau \tau', \]
where \[\tau:=(\tau_1,\dotsc,\tau_n)'~~\mbox{and}~~
\tau_{i}:=2- \sum_{\{j \in V(G):(i,j) \in E(G)\}} r_{ij}~~~i=1,\dotsc,n. \]
A far reaching generalization of this result that gives an inverse formula for a generalized resistance matrix of a strongly connected and matrix weighted balanced directed graph is obtained in this paper. When the weights are scalars, it is shown that the generalized resistance
is a non-negative real number. We also obtain a perturbation result involving resistance matrices of connected graphs and Laplacians of digraphs.
\end{abstract}
{\bf Keywords.} Balanced digraphs,  Laplacian matrices, resistance matrices, row diagonally dominant matrices, Jacobi identity.
\\
{\bf AMS CLASSIFICATION.} 05C50
\section{Introduction}
Let $G$ be a simple connected graph.
Suppose $x$ and $y$ are any two vertices of $G$. The length of the shortest path connecting
$x$ and $y$ in $G$
is the natural way to define the distance between $x$ and $y$.
This classical distance has certain limitations. For instance, consider two graphs $G_1$ and $G_2$ such that
\begin{enumerate}
\item[ \rm (i)] $V(G_1)=V(G_2)=\{1,\dotsc,n\}$.
\item[\rm (ii)] $i$ and $j$ are adjacent in both $G_1$ and $G_2$.
\item[\rm (iii)] There is only path between $i$ and $j$ in $G_1$ and there are multiple paths connecting $i$ and $j$
in $G_2$.
\end{enumerate}
Then the shortest distance between $i$ and $j$ in both $G_1$ and $G_2$ is {\it one}.
However, since there are multiple paths connecting $i$ and $j$ in $G_2$, the communication between $i$ and $j$ in
$G_2$ is better than in $G_1$. 
This significance is not reflected in the shortest distance.
Several applications require to overcome this limitation.
Instead of the classical distance, the so-called resistance distance
is used widely in many situations like in
electrical networks, chemistry and random walks: see 
for example \cite{bapat} and \cite{kr}.
If there are multiple paths between two vertices, then the resistance distance is less than
the shortest distance. The resistance matrix is now the matrix with $(i,j)^{\rm th}$ entry equal to the resistance distance between 
$i$ and $j$. Resistance matrices are non-singular and the inverse is given by an elegant formula that can be computed directly
from the graph. The main purpose of this paper is to deduce a formula for the inverse of a generalized resistance matrix of a simple digraph
with some special properties. 
This new formula generalizes the following known results. 
\subsection{Inverse of the resistance matrix of a connected graph} 
Let $G$ be a connected graph with $V(G)=\{1,\dotsc,n \}$. Let $\delta_i$ denote the degree of the vertex $i$ and $A$ be the
adjacency matrix of $G$. Then the Laplacian matrix of $G$ is
$L=\Diag(\delta_1,\dotsc,\delta_n)-A$.
Now the resistance between any two vertices $i$ and $j$ in $G$ is 
\begin{equation} \label{resistancedefn}
r_{ij}:=l_{ii}^{\dag} + l_{jj}^\dag -2l_{ij}^\dag,
\end{equation}
where $l_{ij}^\dag$ is the $(i,j)^{\rm th}$ entry of the Moore Penrose inverse of $L$.
Define $R:=[r_{ij}]$. Then the inverse of $R$ is given by
\begin{equation} \label{rinverseformula}
R^{-1}=-\frac{1}{2}L + \frac{1}{\tau' R \tau} \tau \tau', 
\end{equation}
where \[\tau:=(\tau_1,\dotsc,\tau_n)'~~\mbox{and}~~
\tau_{i}:=2- \sum_{\{j \in V(G):(i,j) \in E\}} r_{ij}~~~i=1,\dotsc,n. \]

The proof of (\ref{rinverseformula}) is given in Theorem 9.1.2 in \cite{bapat}.

\subsection{Inverse of the distance matrix of a tree}
Let $T$ be a tree with $V(T)=\{1,\dotsc,n\}$ and $r_{ij}$ (defined in (\ref{resistancedefn})) be
the resistance distance between any two vertices $i$ and $j$.
If $d_{ij}$ is the length of the shortest path connecting $i$ and $j$ in $T$, 
then by an induction argument, it can be shown that $d_{ij}=r_{ij}$. Define $D:=[d_{ij}]$.
Specializing formula (\ref{rinverseformula}) to $T$ gives
\begin{equation} \label{GrahamLovasz}
D^{-1}=-\frac{1}{2} L + \frac{(2-\delta_1,\dotsc,2-\delta_n)'(2-\delta_1,\dotsc,2-\delta_n)}{2 (n-1)}, 
\end{equation}
where $\delta_i$ is the degree of the vertex $i$ and $L$ is the Laplacian matrix of $T$.
This formula is obtained by Graham and  Lov\'{a}sz in \cite{Gr}.
\subsection{Inverse of the distance matrix of a weighted tree} \label{sec}
Formula $(\ref{GrahamLovasz})$ can be generalized to weighted trees. 
We first need to define the Laplacian matrix of a weighted tree.
Consider a tree $G=(V,\Omega)$ with
$V=\{1,\dotsc,n\}$. To an edge $(i,j) \in \Omega$, we assign a positive real number 
$w_{ij}$. Define

\begin{equation*}
    l_{ij} := \begin{cases}~~~~~~~~~ -\frac{1}{w_{ij}} & (i,j) \in \Omega \\ 
~~~~~~~~~~~~~~    0 & i \neq j~\mbox{and}~(i,j) \notin \Omega \\
   \sum \limits_{\{k:(i,k) \in \Omega\}} \frac{1}{w_{ik}} & i=j .
    \end{cases}
\end{equation*}

Then the Laplacian matrix of $G$ is $L:=[l_{ij}]$. 
The distance matrix of $G$ is the symmetric matrix $D$ with $(i,j)^{\rm th}$ entry equal to sum of all the weights that lie in the
path connecting $i$ and $j$.
In this case, by an induction argument, it can be shown that
$LDL+2L=0$ and from this identity it is easy to show that
$d_{ij}=l_{ii}^{\dag}+l_{jj}^{\dag}-2 l_{ij}^{\dag}$, where $l_{ij}^{\dag}$ is the $(i,j)^{\rm th}$ entry of the Moore Penrose inverse of $L$. 
Let $\delta_i$ be the degree of the vertex $i$.
In this setting, the following inverse formula is obtained in \cite{bkn}:
\begin{equation} \label{inverseweight}
D^{-1}=-\frac{1}{2}( L + \frac{\tau \tau'}{ \sum_{i,j} {w_{ij}}}),
\end{equation}
where $\tau$ is the vector $(2-\delta_1,\dotsc,2-\delta_n)'$.

\subsection{Inverse of the distance matrix of a tree with matrix weights} \label{secmw}
Formula $(\ref{inverseweight})$
can be generalized. Consider a tree on $n$ vertices with 
vertex set $V(T)=\{1,\dotsc,n\}$ and edge set $E(T)$. To an edge $(i,j)$ in $T$, assign a positive definite matrix $W_{ij}$ of some fixed order $s$.
Define
\begin{equation*}
    L_{ij} := \begin{cases} ~~~~~~~~~~~~-W_{ij}^{-1} & (i,j) \in E(T) \\ ~~~~~~~~~~~~~~~~~O_s & i \neq j~\mbox{and}~(i,j) \notin E(T) \\
    \sum \limits_{\{k: (i,k) \in E(T)\}} W_{ik}^{-1} & i=j.\\
    \end{cases}
\end{equation*}
(Here $O_s$ is the $s \times s$ matrix 
with all entries equal to zero.)
The Laplacian matrix $L$ of $T$ is then the $ns \times ns$ matrix with
$(i,j)^{\rm th}$ block equal to $L_{ij}$.
The distance between any two vertices $i$ and $j$ in $T$ is the sum of all positive definite matrices
that lie in the path connecting $i$ and $j$.
Let the $(i,j)^{\rm th}$ block of the Moore-Penrose inverse of $L$
be given by $M_{ij}$. Then, by induction it can be shown that
\[D_{ij}=M_{ii} + M_{jj} -2 M_{ij}. \]
The inverse of $D$ is 
\begin{equation} \label{matrixwinv}
D^{-1}=-\frac{1}{2}(L + FS^{-1} F'),
\end{equation}
where
\[S=\sum \limits_{i} \sum \limits_{j} W_{ij}~~\mbox{and}~~F=(2-\delta_1,\dotsc,2-\delta_n)' \otimes I_s. \]
Formula (\ref{matrixwinv}) is obtained in \cite{rbb}.

\subsection{Inverse of the resistance matrix of a directed graph}
Let $G=(V,\E)$ be a digraph with vertex set $V=\{1,\dotsc,n\}$. A directed edge from a vertex $i$ to a vertex $j$ in $G$ will be denoted by 
$(i,j)$. Recall that $G$ is said to be strongly connected if 
there is a directed path between any two vertices $i$ and $j$. A vertex $i$ is said to be balanced if the outdegree of $i$ and the indegree of $i$ are equal. If all the vertices are balanced, then we say that $G$ is balanced. We now assume that
$G$ is a strongly connected and balanced digraph.  
If $i$ and $j$ are any two vertices, define 
\begin{equation*}
    a_{ij} := \begin{cases} 1 & (i,j) \in \E \\ 0 & \text{otherwise.}
    \end{cases}
\end{equation*}
The Laplacian matrix of $G$ is the matrix $L=[l_{ij}]$ such that
\begin{equation*}
    l_{ij} := \begin{cases}~~~~~ -a_{ij} & i \neq j \\ \sum \limits_{\{k:k \neq i\}} a_{ik} & i=j. 
    \end{cases}
\end{equation*}
Now, let $L^{\dag}:=[l_{ij}^\dag]$ be the Moore-Penrose inverse of $L$.
Then the resistance between any two vertices $i$ and $j$ in $G$ is given by
\[r_{ij}:=l_{ii}^{\dag} + l_{jj}^{\dag} -2 l_{ij}^\dag. \]
For the resistance matrix $R=[r_{ij}]$ of $G$, we have the following inverse formula from
\cite{bbs}:
\begin{equation} \label{rinversedgraph}
R^{-1}=-\frac{1}{2} L + \frac{1}{\tau' R \tau} (\tau (\tau'+ \1' \diag(L^\dag)M)),
\end{equation}
where
\[M:=L-L',~~
\tau_i:=2-\sum_{\{j: (i,j) \in \E\}}r_{ji}, \]
and $\1$ is the vector of all ones in $\rr^n$.

We now ask if there is a formula that unifies
 (\ref{rinverseformula}), ($\ref{GrahamLovasz}$), ($\ref{inverseweight}$), ($\ref{matrixwinv}$) and ($\ref{rinversedgraph}$).

\subsection{Results obtained} \label{secr}
We consider a simple digraph $\g=(V,\E)$ with $V=\{1,\dotsc,n\}$.
An element in $\E$ will be denoted by $(i,j)$. Precisely, $(i,j) \in \E$ means that
there is a directed edge from a vertex $i$ to a vertex $j$ in $\g$.
All edges are assigned a positive definite matrix of some fixed order $s$.
These positive definite matrices will be called {\it weights}.
Let $W_{ij}$ be the weight of the edge $(i,j)$.
In this set up, we define the following.
\begin{enumerate}
\item[\rm (i)]  {\bf  Laplacian of $\g$:} If $i$ and $j$ are any two distinct vertices in $\g$,
define 
\begin{equation*}
    L_{ij} := \begin{cases} -W_{ij}^{-1} & (i,j) \in \E \\ ~~~~O_{s} & \text{otherwise}.
    \end{cases}
\end{equation*}
(Here, $O_s$ is the $s \times s$ matrix with all entries equal to zero.)
The Laplacian of $\g$ is then the $ns \times ns$ matrix 
\[L(\g):=
\left[
\begin{array}{rrrrr}
-\sum \limits_{\{j: j  \neq 1\}}  L_{1j} & L_{12} & \hdots & L_{1n} \\
L_{21} & -\sum \limits_{\{j: j \neq 2\}} L_{2j} & \hdots & L_{2n} \\
\vdots & \vdots  & \vdots & \vdots \\
L_{n1} & L_{n2} & \hdots & -\sum \limits_{\{j: j \neq n\}} L_{nj}
\end{array}
\right].
\]

We shall say that $L_{ij}$ is the $(i,j)^{\rm th}$ block of $L(\g)$. We note that since 
$\g$ is a digraph, $L(\g)$ is not symmetric in general.

\item[\rm (ii)] {\bf Resistance matrix of $\g$:} Let $i$ and $j$ be any two vertices in $\g$. Fix 
 $a,b>0$. Then, a { \it generalized  resistance distance} between $i$ and $j$ is the $s \times s$ matrix 
 \[R_{ij}=a^2K_{ii}+b^2 K_{jj} -2ab K_{ij}, \]
 where $K_{ij}$ is the $(i,j)^{\rm th}$ block of the Moore Penrose inverse of $L$.
The resistance matrix corresponding to $a$ and $b$ is then
 the $ns \times ns$
matrix with $(i,j)^{\rm th}$ block equal to $R_{ij}$.

 \item[\rm (iii)] {\bf Balanced vertices:} We say that a vertex $j \in V$ is balanced if
\[\sum_{\{i \in V:(i,j) \in \E\}} W_{ij}^{-1} = \sum_{\{i \in V:{(j,i) \in \E}\}} W_{ji}^{-1}. \]

\item[\rm (iv)] {\bf Balanced digraphs:} If every vertex of $\g$ is balanced, then we shall say that $\g$ is balanced.   
\end{enumerate}

We obtain the following result in this paper.
\begin{theorem} \label{T}
Let $\g=(V,\E)$ be a weighted, balanced and strongly connected
digraph, where $V=\{1,\dotsc,n\}$. Let $W_{ij}$ be the weight of
the edge $(i,j)$. 
Fix $a,b>0$.
If $R$ is the resistance matrix of $\g$,
$L$ is the Laplacian of $\g$, and $L^{\dag}$ is the Moore-Penrose inverse of $\g$,
then
\[ R^{-1} =- \frac{1}{2ab} L+  \tau (\tau' R \tau)^{-1} (\tau'-a^2U'\del L'+b^2 U' \del L),\]
where 
$\tau=(\tau_1,\dotsc,\tau_n)'$ is given by 
\[{\tau}_i := 2abI_s+L_{ii}R_{ii}-\sum_{\{j:(i,j) \in \E\}}W_{ij}^{-1}{R_{ji}},\]
$U = [\underbrace{I_s,\dotsc,I_s}_{n}]$ and $\del = \Diag(K_{11},\dotsc,K_{nn})$,  with $K_{ii}$ being the $i^{\rm th}$ diagonal block of $L^\dag$.
\end{theorem}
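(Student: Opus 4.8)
The plan is to verify directly that $R$ times the claimed right–hand side is the identity; since $R$ is a square $ns\times ns$ matrix, a one–sided inverse is automatically two–sided, so $RX=I_{ns}$ will give $X=R^{-1}$. Here $X:=-\frac{1}{2ab}L+\tau(\tau'R\tau)^{-1}N$ with $N:=\tau'-a^2U\del L'+b^2U\del L$, reading $U=[I_s,\dots,I_s]$ as the $s\times ns$ row so that $N$ is $s\times ns$. The first move is to record the compact form $R=a^2\del U'U+b^2U'U\del-2abL^\dag$, which is immediate from $R_{ij}=a^2K_{ii}+b^2K_{jj}-2abK_{ij}$ together with $(\del U'U)_{ij}=K_{ii}$ and $(U'U\del)_{ij}=K_{jj}$. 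Next I would assemble the structural identities forced by strong connectivity and balance. Strong connectivity makes the null space of $L$ exactly the column space of $U'$, so $LU'=0$; balance (vanishing block–column sums) gives $UL=0$. Consequently $L^\dag U'=0$, $UL^\dag=0$, and $LL^\dag=L^\dag L=I_{ns}-\tfrac1nU'U$. I would also use that every block $L_{ij}=-W_{ij}^{-1}$ and $L_{ii}=\sum_jW_{ij}^{-1}$ is symmetric, so that $(L')_{ij}=L_{ji}$.

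Second, I would simplify $\tau$. Using $R_{ii}=(a-b)^2K_{ii}$, $W_{ij}^{-1}=-L_{ij}$, the block–row–sum identity $\sum_jL_{ij}=0$, and $(LL^\dag)_{ii}=\tfrac{n-1}{n}I_s$, the definition of $\tau_i$ collapses to $\tau_i=\tfrac{2ab}{n}I_s+a^2\sum_jL_{ij}K_{jj}$, that is $\tau=a^2L\del U'+\tfrac{2ab}{n}U'$. Two consequences drive everything. First, balance ($\sum_iL_{ij}=0$) yields $U\tau=2abI_s$, hence $\tau'U'=2abI_s$. Second, a short computation of $R\tau$ using $LU'=0$, $UU'=nI_s$, and $L^\dag U'=0$ shows that all the $\del U'$ contributions cancel and $R\tau=U'C$ for the single $s\times s$ matrix $C=a^2b^2\,U\del L\del U'+\tfrac{2ab(a^2+b^2)}{n}\,U\del U'$. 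Therefore $\tau'R\tau=(\tau'U')C=2abC$ and $R\tau(\tau'R\tau)^{-1}=U'C(2abC)^{-1}=\tfrac{1}{2ab}U'$. This step presupposes that $\tau'R\tau$, equivalently $C$, is invertible; establishing this nonsingularity—plausibly by a positive–definiteness argument of the kind the abstract alludes to for scalar weights—is one item that must be checked separately.

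Third, I would compute the Laplacian part. From the compact form, $RL=b^2U'U\del L-2ab\,I_{ns}+\tfrac{2ab}{n}U'U$, whence $-\tfrac{1}{2ab}RL=I_{ns}-\tfrac{b}{2a}U'U\del L-\tfrac1nU'U$. Combining with the rank–$s$ term gives $RX=I_{ns}-\tfrac{b}{2a}U'U\del L-\tfrac1nU'U+\tfrac{1}{2ab}U'N$. Expanding $\tfrac{1}{2ab}U'N$ blockwise (using $\tau_k'=\tfrac{2ab}{n}I_s+a^2\sum_pK_{pp}'L_{kp}$ and $(L')_{pk}=L_{kp}$), the $\tfrac{b}{2a}U'U\del L$ pieces cancel exactly, as do the $\tfrac1nU'U$ pieces, and the only survivor in the $(i,k)$ block is $\tfrac{a}{2b}\sum_p(K_{pp}'-K_{pp})L_{kp}$. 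Thus $RX=I_{ns}$ is equivalent to the residual identity $\sum_p(K_{pp}-K_{pp}')L_{kp}=0$ for every $k$, equivalently $U(\del-\del')L'=0$ where $\del'=\Diag(K_{11}',\dots,K_{nn}')$.

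The main obstacle is exactly this residual identity, which concerns the antisymmetric parts of the diagonal blocks of $L^\dag$. The cleanest way to secure it is a lemma stating that for a balanced strongly connected digraph the diagonal blocks $K_{ii}$ of $L^\dag$ are symmetric, so that $\del'=\del$ and the $a^2$–term of $N$ legitimately carries $\del$; this is precisely where the balance hypothesis is used at full strength, since the term attached to $L'$ is governed by the diagonal blocks of $(L^\dag)'=(L')^\dag$, and the reverse digraph is again balanced and strongly connected. I would prove this lemma from the EP property $LL^\dag=L^\dag L$ together with the block–symmetry $(L')_{ij}=L_{ji}$ and the relations $UL^\dag=L^\dag U'=0$ (and their analogues for $L'$). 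Once the residual vanishes termwise one gets $RX=I_{ns}$, completing the argument; the two points demanding the most care are this diagonal–block symmetry and the invertibility of $\tau'R\tau$.
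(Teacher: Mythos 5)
Your overall route is the same as the paper's: you establish $\tau = a^2L\del U + \tfrac{2ab}{n}U$, write $R = a^2\del UU' + b^2UU'\del - 2abL^{\dag}$, use the relations $LU=0$, $U'L=0$, $L^{\dag}U=0$, $U'L^{\dag}=0$, $LL^{\dag}=L^{\dag}L=I_{ns}-\tfrac1n UU'$, deduce $R\tau(\tau'R\tau)^{-1}=\tfrac{1}{2ab}U$, and then verify one side (you check $RX=I_{ns}$ for the claimed inverse $X$, the paper checks $XR=I_{ns}$; for square $R$ these are equivalent). Of the two gaps you flag, the invertibility of $\tau'R\tau$ is harmless: it is closed exactly as in items (vi)--(vii) of Lemma \ref{3}, because $\tau'R\tau = 2a^3b^3\widetilde X'L\widetilde X + \tfrac{4a^2b^2(a^2+b^2)}{n}\sum_i K_{ii}$ with $L$ positive semidefinite and each $K_{ii}$ positive definite. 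The other gap --- your residual identity $\sum_p (K_{pp}-K_{pp}')L_{kp}=0$ for every $k$ --- is exactly the right thing to isolate, and you are correct that the paper needs it as well: in the proofs of items (ii) and (iv) of Lemma \ref{3}, item (i) is transposed as if $\del' = \del$, i.e.\ as if the diagonal blocks of $L^{\dag}$ were symmetric, with no justification given.

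Unfortunately this gap cannot be closed, so the proof cannot be completed along the lines you propose (nor along the paper's). First, symmetry of the $K_{pp}$ does not follow from the ingredients you invoke: there exist matrices with symmetric blocks satisfying the EP property, $LU=U'L=0$, $L^{\dag}U=U'L^{\dag}=0$, and even positive semidefiniteness, whose Moore--Penrose inverses have non-symmetric diagonal blocks; the sign structure of a genuine Laplacian is not captured by those relations. More decisively, the residual identity itself fails for genuine balanced digraphs once the weights stop commuting. Take $n=3$, $s=2$, the complete digraph on $\{1,2,3\}$, with weights determined by
\[
W_{12}^{-1}=\begin{pmatrix}1&0\\0&2\end{pmatrix},\quad
W_{21}^{-1}=\begin{pmatrix}2&0\\0&3\end{pmatrix},\quad
W_{23}^{-1}=\begin{pmatrix}2&1\\1&1\end{pmatrix},\quad
W_{32}^{-1}=\begin{pmatrix}3&1\\1&2\end{pmatrix},\quad
W_{13}^{-1}=2I_2,\quad
W_{31}^{-1}=I_2;
\]
every vertex is balanced and the digraph is strongly connected. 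Computing $L^{\dag}=(L+\tfrac13 UU')^{-1}-\tfrac13 UU'$ exactly (the matrix $3L+UU'$ has integer entries), one finds
\[
K_{11}-K_{11}' \;=\; K_{33}-K_{33}' \;=\; \frac{1}{396}\begin{pmatrix}0&1\\-1&0\end{pmatrix},
\qquad
K_{22}-K_{22}' \;=\; -\frac{1}{792}\begin{pmatrix}0&1\\-1&0\end{pmatrix}.
\]
So the diagonal blocks of $L^{\dag}$ are not symmetric, and their antisymmetric parts are not all equal. Since $\nulls(L)=\c(J)$, your residual identity is equivalent to $K_{pp}-K_{pp}'$ being independent of $p$; it therefore fails here, and consequently $RX-I_{ns}=\tfrac{a}{2b}UU'(\del'-\del)L'\neq 0$. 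In other words, what your analysis uncovers is not a missing lemma but a genuine flaw: for non-commuting matrix weights the formula of Theorem \ref{T} (and the paper's proof of it) breaks at precisely the step you could not justify. This is invisible in the paper's worked example because there all the weight matrices commute, which forces every block of $L^{\dag}$ to be symmetric, and it is invisible for scalar weights ($s=1$), where diagonal blocks are trivially symmetric, so the classical special cases being generalized are unaffected.
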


To illustrate, we give an example.
\begin{example}\rm
Consider the following graph $\g$ on four vertices.
\begin{figure}[tbhp]
\centering
\begin{tikzpicture}[shorten >=1pt, auto, node distance=3cm, ultra thick,
   node_style/.style={circle,draw=black,fill=white !20!,font=\sffamily\Large\bfseries},
   edge_style/.style={draw=black, ultra thick}]
\node[label=above:$1$,draw] (1) at  (0,0) {};
\node[label=above:$2$,draw] (2) at  (2,0) {};
\node[label=below:$3$,draw] (3) at  (2,-2) {};
\node[label=below:$4$,draw] (4) at  (0,-2) {};
\draw[edge]  (1) to (4);
\draw[edge]  (4) to (3);
\draw[edge]  (3) to (2);
\draw[edge]  (4) to (2);
\draw[edge]  (2) to (1); 
\end{tikzpicture}
\caption{Directed Graph $\g$.} \label{eg:dircycle}
\end{figure}
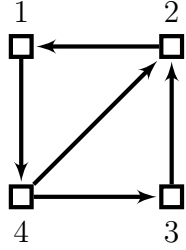 
Let
\[W_{14}= W_{21}=\frac{1}{5}\left[\begin{array}{rr}
3 & -4 \\
-4 & 7\end{array}\right],~~ W_{43}= W_{32}=\left[\begin{array}{rr}
1 & -1 \\
-1 & 2\end{array}\right],\]
and
\[W_{42}=\left[\begin{array}{rr}
2 & -3 \\
-3 & 5\end{array}\right].\]
The graph $\g$ is balanced with the above matrix weights. The Laplacian of $\g$ is
\[L = [L_{ij}]=\left[\begin{array}{rr|rr|rr|rr}
7 & 4 & 0 & 0 & 0 & 0 & -7 & -4 \\
4 & 3 & 0 & 0 & 0 & 0 & -4 & -3 \\
\hline
 -7 & -4 & 7 & 4 & 0 & 0 & 0 & 0 \\
-4 & -3 & 4 & 3 & 0 & 0 & 0 & 0 \\
\hline
 0 & 0 & -2 & -1 & 2 & 1 & 0 & 0 \\
0 & 0 & -1 & -1 & 1 & 1 & 0 & 0 \\
\hline
 0 & 0 & -5 & -3 & -2 & -1 & 7 & 4 \\
0 & 0 & -3 & -2 & -1 & -1 & 4 & 3\end{array}\right].\]
The Moore-Penrose inverse of $L$ is the matrix
\[L^{\dag} = [K_{ij}]=\frac{1}{80}\left[\begin{array}{rr|rr|rr|rr}
20 & -25 & -16 & 23 & -12 & 11 & 8 & -9 \\
-25 & 45 & 23 & -39 & 11 & -23 & -9 & 17 \\
\hline
8 & -9 & 20 & -25 & -24 & 27 & -4 & 7 \\
-9 & 17 & -25 & 45 & 27 & -51 & 7 & -11 \\
\hline
-12 & 11 & 0 & -5 & 36 & -33 & -24 & 27 \\
11 & -23 & -5 & 5 & -33 & 69 & 27 & -51 \\
\hline
-16 & 23 & -4 & 7 & 0 & -5 & 20 & -25 \\
23 & -39 & 7 & -11 & -5 & 5 & -25 & 45\end{array}\right].\]
Suppose $a=2$ and $b=3$. The resistance matrix of $\g$ is
\begin{equation*}
\begin{aligned}
    R  &= [4K_{ii}+9 K_{jj} -12 K_{ij}]\\
    &=\frac{1}{80}\left[\begin{array}{rr|rr|rr|rr}
20 & -25 & 452 & -601 & 548 & -529 & 164 & -217 \\
-25 & 45 & -601 & 1053 & -529 & 1077 & -217 & 381 \\
\hline
164 & -217 & 20 & -25 & 692 & -721 & 308 & -409 \\
-217 & 381 & -25 & 45 & -721 & 1413 & -409 & 717 \\
\hline
468 & -489 & 324 & -297 & 36 & -33 & 612 & -681 \\
-489 & 957 & -297 & 621 & -33 & 69 & -681 & 1293 \\
\hline
452 & -601 & 308 & -409 & 404 & -337 & 20 & -25 \\
-601 & 1053 & -409 & 717 & -337 & 741 & -25 & 45\end{array}\right].
\end{aligned}
\end{equation*}
Next, we compute $\tau=(\tau_1,\tau_2, \tau_3,\tau_4)'$. Recall that
\begin{equation*}\label{taudef}
{\tau}_i := 2abI_s+L_{ii}R_{ii}-\sum_{\{j:(i,j) \in E\}}W_{ij}^{-1}{R_{ji}}.
\end{equation*}
Thus,
\[\tau=\frac{1}{5}\left[\begin{array}{rr|rr|rr|rr}
15 & 0 & 15 & 0 & 21 & 2 & 9 & -2 \\
0 & 15 & 0 & 15 & 2 & 19 & -2 & 11\end{array}\right]',\]
\[\tau'R\tau =\frac{1}{5}\left[\begin{array}{rr}
2916 & -3159 \\
-3159 & 6075\end{array}\right],\]
and 
\[\tau'-4U'\del L'+9 U' \del L= \frac{1}{10}\left[\begin{array}{rr|rr|rr|rr}
30 & 0 & 3 & -9 & 57 & 9 & 30 & 0 \\
0 & 30 & -9 & 12 & 9 & 48 & 0 & 30\end{array}\right].\]
Hence, we have
\begin{equation*}
    \begin{aligned}
     &- \frac{1}{2ab} L+  \tau (\tau' R \tau)^{-1} (\tau'-4U'\del L'+9 U' \del L)\\
     \\
     &=\frac{1}{42444}\left[\begin{array}{rr|rr|rr|rr}
-23259 & -13368 & -84 & -138 & 3084 & 1698 & 26259 & 14928 \\
-13368 & -9891 & -138 & 54 & 1698 & 1386 & 14928 & 11331 \\
\hline
26259 & 14928 & -24843 & -14286 & 3084 & 1698 & 1500 & 780 \\
14928 & 11331 & -14286 & -10557 & 1698 & 1386 & 780 & 720 \\
\hline
2204 & 1188 & 6938 & 3351 & -2530 & -975 & 2204 & 1188 \\
1188 & 1016 & 3351 & 3587 & -975 & -1555 & 1188 & 1016 \\
\hline
796 & 372 & 17653 & 10521 & 8698 & 4371 & -23963 & -13776 \\
372 & 424 & 10521 & 7132 & 4371 & 4327 & -13776 & -10187
\end{array}\right]
    \end{aligned}
\end{equation*}
which is equal to $R^{-1}$.
\end{example}

\subsection{Other results}
We obtain the following two results after proving Theorem \ref{T}.
\begin{itemize}
\item By numerical computations, 
we observe that for any $a,b>0$, 
\[R_{ij}=a^{2} K_{ii} + b^{2} K_{jj}-2ab K_{ij} \]
is positive semidefinite. We do not know how to prove this result in general. However, when the weights in $\g$ of Theorem $1$ are
positive scalars, we show that $R_{ij}$ is always a non-negative real number.

\item Let $T$ be a tree on $n$ vertices. Suppose $D$ and $L$ are the distance and Laplacian matrices of $T$. Then, from $(\ref{GrahamLovasz})$ it can be deduced that
\[(D^{-1}-L)^{-1}=\frac{1}{3}D + \frac{1}{3} (\sum_{i,j} w_{ij} )\1 \1'.  \]
In particular, this equation says that every entry in 
$(D^{-1}-L)^{-1}$
is non-negative. Suppose $M$ is the Laplacian matrix of an arbitrary tree on $n$ vertices. 
It can be shown that $D^{-1}-M$ is non-singular.
We now say that
$(D^{-1}-M)^{-1}$ is a perturbation of the distance matrix $D$. In \cite{bkn}, it is shown that
all perturabations of $D$ are non-negative matrices. We now assume that $R$ is the resistance matrix of a
connected graph (defined in Section 1.1) on $n$ vertices. Now consider the Laplacian matrix $L$ of $\g$ in Theorem $\ref{T}$.  Suppose
all the weights in $\g$ are  positive scalars. 
It can be shown that $R^{-1}-L$ is always non-singular. We now say that $(R^{-1}-L)^{-1}$
is a perturabation of $R$.
By performing certain numerical experiments, we observed that similar to the result in \cite{bkn},  all perturbations of $R$
are non-negative matrices. Since $\g$ is a digraph, $L$ is not symmetric in general and hence all perturbations of $R$ are not symmetric matrices. 
Despite this difficulty, by using a argument different from \cite{bkn}, we show that all perturbations of $R$ are non-negative. This result is proved in the final part of this paper.
\end{itemize}

\section{Preliminaries}
We mention the notation and some basic results that will be used in the paper. 
\begin{enumerate}
\item[{\rm (i)}] We reserve $\g$ to denote a simple, strongly connected, weighted, and balanced digraph with vertex set $V=\{1,\dotsc,n\}$.
A directed edge from $i$ to $j$ in $\g$ will be denoted by $(i,j)$.
We use $\E$ to denote the edge set of $\g$.
 The weight of an edge $(i,j)$  will be denoted by 
$W_{ij}$. All weights will be symmetric positive definite matrices and have fixed order $s$.

\item[\rm (ii)] Let $B^{ns}$ be the set of all real $ns \times ns$ matrices.
A matrix $A$ in $B^{ns}$ will be denoted by $[A_{ij}]$, where $A_{ij}$ is an 
$s \times s$ matrix. We shall say that $A_{ij}$ is the $(i,j)^{\rm th}$ block of
$A$. There are $n$ blocks in $A$. The null-space of a matrix $A$ is denoted by $\nulls(A)$ and the
column space by $\c(A)$.

\item[\rm (iii)] The vector of all ones in $\rr^{n}$ will be denoted by $\1$. 
The matrix $\1' \otimes I_s$ will be denoted by $U'$, i.e.
\[U:=[I_s,\dotsc,I_s]', \]
where $I_s$ appears $n$ times. We use $J$ to denote the matrix in $B^{ns}$ with all blocks equal to
$I_s$. Note that $J=UU'$. 

\item[\rm (iv)] The Laplacian matrix of $\g$ will be denoted by $L$ and its Moore-Penrose by $L^{\dag}$. 
We note that $L$ and $L^{\dag}$ belong to $B^{ns}$. 
The $(i,j)^{\rm th}$ block of $L^\dag$ will be denoted by $K_{ij}$.
We use
$\Delta(L^\dag)$ to denote the block diagonal matrix
\[\mbox{Diag}(K_{11},\dotsc,K_{nn}).\] 
Let $\g'$ be the digraph 
such that $V(\g'):=\{1,\dotsc,n\}$ and 
\[E(\g'):=\{(j,i): (i,j) \in E(\g) \} \]
To an edge $(i,j)$ of $\g'$, we assign the weight $W_{ji}$.
Again $\g'$ will be strongly connected, and balanced.
The Laplacian of $\g'$ is clearly $L'$. 

\item[\rm (v)] 
Fix $a,b>0$.
The generalized resistance matrix of $\g$ corresponding to $a$ and $b$ will be denoted by $R_{a,b}$.
Thus, $R_{a,b}$ is an element in $B^{ns}$ with $(i,j)^{\rm th}$ block equal to
\[a^2 K_{ii} + b^{2} K_{jj} -2ab K_{ij}. \]

\item[\rm (vi)] 
Let $A$ be an $m \times m$ matrix. 
\begin{enumerate}
\item We say that $A$ is positive semidefinite if
$x'Ax \geq 0$ for all $x \in \rr^m$ and positive definite if $x'Ax >0$ for all $0 \neq x \in \rr^m$.
(To define positive semidefiniteness, we do not assume that $A$ is symmetric.)

\item Following \cite{Lewis}, we say that $A$ is almost positive definite if for each $x \in \rr^m$, either
$x'Ax>0$ or $Ax=0$. Suppose $A$ is almost positive definite. Then the Moore-Penrose inverse of $A$ is also
almost positive definite: see Corollary $2$ in \cite{Lewis}.
\end{enumerate}
\item[\rm (vii)] Let $B=[A_1, \dotsc,A_{n}]$, where each $A_{i}$ is an $s \times s$ matrix. 
As before, we say that $A_{j}$ is the $j^{\rm th}$ block of $B$.
Let $\Diag(B)$ be the 
$ns \times ns$ block matrix 
\[\Diag (A_{1},\dotsc,A_{n}).\]

\item[\rm (vii)] We use $[n]$ for $\{1,\dotsc,n\}$. The zero matrix of order $s \times s$ ($s \geq 2$) will be denoted by $O_s$. The identity matrix of order $k$ will be denoted by $I_k$.

\item[\rm (viii)]  Let $A=[a_{ij}]$ be an $m \times m$ matrix. We say that $A$ is row diagonally dominant if
\[|a_{ii}| \geq \sum_{\{j: j \neq i\}} |a_{ij}|.  \] We shall use the following well known result on diagonally dominant matrices.
\begin{theorem} \label{rdominant}
Let $A$ be row diagonally dominant. Suppose $A$ is non-singular. Let $B:=A^{-1}$ and $B=[b_{ij}]$.
Then, $$|b_{ii}| \geq |b_{ji}|~~\forall j. $$
\end{theorem}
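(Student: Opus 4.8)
The plan is to prove the result one column at a time by exploiting the defining equation $AB=I$. Fix a column index $i$ and write $b=(b_{1i},\dotsc,b_{mi})'$ for the $i$-th column of $B=A^{-1}$, so that $Ab=e_i$, where $e_i$ is the $i$-th standard basis vector. Since $B$ is non-singular, $b\neq 0$, and I would set $M:=\max_{j}|b_{ji}|>0$ together with the index set $S:=\{j:|b_{ji}|=M\}$ on which the maximum modulus is attained. The desired conclusion $|b_{ii}|\ge|b_{ji}|$ for all $j$ is precisely the assertion that $i\in S$, so I would argue by contradiction and assume $i\notin S$.

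Under this assumption, every $k\in S$ satisfies $k\neq i$, so the $k$-th coordinate of $Ab=e_i$ reads $\sum_{j}a_{kj}b_{ji}=0$, giving $a_{kk}b_{ki}=-\sum_{j\neq k}a_{kj}b_{ji}$. Taking moduli and applying the triangle inequality, then the bound $|b_{ji}|\le M$, and finally row diagonal dominance, yields the chain
\[|a_{kk}|M=\Big|\sum_{j\neq k}a_{kj}b_{ji}\Big|\le\sum_{j\neq k}|a_{kj}|\,|b_{ji}|\le M\sum_{j\neq k}|a_{kj}|\le M|a_{kk}|.\]
Because $M>0$, all of these inequalities must be equalities. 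The equality in the middle forces $\sum_{j\neq k}|a_{kj}|\,(M-|b_{ji}|)=0$, and as each summand is non-negative, $a_{kj}\neq 0$ implies $|b_{ji}|=M$, i.e. $j\in S$. Thus $S$ is \emph{forward closed}: for every $k\in S$ and every $j\notin S$ we have $a_{kj}=0$.

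This closure property is the crux, and the step I expect to be the main obstacle is turning it into a genuine contradiction without assuming that $A$ is irreducible. The resolution is a block-triangular determinant argument: reordering the indices so that $S$ comes first, the relations $a_{kj}=0$ for $k\in S,\ j\notin S$ show that $A$ has the block form $\left[\begin{smallmatrix}A_{SS}&0\\ A_{S^{c}S}&A_{S^{c}S^{c}}\end{smallmatrix}\right]$, whence $\det A=\det A_{SS}\cdot\det A_{S^{c}S^{c}}$. Restricting $Ab=e_i$ to the rows indexed by $S$ and using $a_{kj}=0$ for $j\notin S$ gives $A_{SS}\,b_{S}=0$, where $b_{S}$ is the subvector of $b$ on $S$; since $|b_{ki}|=M>0$ for each $k\in S$, the vector $b_{S}$ is nonzero, so $A_{SS}$ is singular. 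Then $\det A=0$, contradicting the non-singularity of $A$. Hence $i\in S$, which is exactly the claim, and as $i$ was arbitrary the inequality $|b_{ii}|\ge|b_{ji}|$ holds for every $j$.
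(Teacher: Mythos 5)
Your proof is correct, but there is nothing in the paper to compare it against: the paper states Theorem \ref{rdominant} in its preliminaries as a ``well known result'' on diagonally dominant matrices and never proves it --- it is simply invoked later, in the proof of Lemma \ref{lapprop}. Your argument is the standard maximum-principle proof of this classical fact, and it is complete. The two points that genuinely need care are both handled. First, since the paper's notion of dominance is weak ($|a_{ii}|\ge\sum_{j\ne i}|a_{ij}|$), the inequality chain $|a_{kk}|M\le\sum_{j\ne k}|a_{kj}|\,|b_{ji}|\le M\sum_{j\ne k}|a_{kj}|\le M|a_{kk}|$ cannot by itself produce a contradiction (that shortcut only works under strict dominance); your equality analysis --- every nonzero $a_{kj}$ forces $j$ into the maximizing set $S$ --- is exactly the right substitute. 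Second, your block-triangular step ($A_{SS}b_S=0$ with $b_S\ne 0$, hence $\det A=\det A_{SS}\cdot\det A_{S^cS^c}=0$) converts the closure property of $S$ into singularity of $A$ without any irreducibility hypothesis, which a cruder connectivity argument would have required. Two cosmetic remarks: the degenerate case $a_{kk}=0$ (permitted by weak dominance) collapses the whole chain to $0=0$ and the closure property still follows, so it needs no separate treatment; and in place of the determinant factorization you could equivalently observe that a nonzero left null vector of $A_{SS}$, padded with zeros on $S^c$, exhibits a linear dependence among the rows of $A$ indexed by $S$.
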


\end{enumerate}

\section{Results}
To prove our main result, we need to show that the Laplacian of $\g$ has certain properties.
\subsection{Properties of the Laplacian}
From the digraph $\g=(V,\E)$, we define a simple undirected graph $\G$ as follows. 
\begin{definition} \rm \label{und}
Let $V(\G):=\{1,\dotsc,n\}$.
We say that any two vertices $i,j \in V(\G)$ are adjacent in $\G$
if and only if either $(i,j) \in \E$ or $(j,i) \in \E$.
\end{definition}

\begin{definition}  \rm
To an edge $(i,j)$ of $\G$, define
\begin{equation} \label{mmij}
    \W_{ij} := \begin{cases} (W_{ij}^{-1} + W_{ji}^{-1})^{-1} & (i,j) \in \E ~~\mbox{and}~~(j,i) \in \E\\ 
 ~~~~~~~~~~~~W_{ij} & (i,j) \in \E ~~\mbox{and}~~(j,i) \notin \E \\
 ~~~~~~~~~~~~ W_{ji}~~&(i,j) \notin \E~\mbox{and}~(j,i) \in \E. \\  
    \end{cases}
\end{equation} 
\end{definition}
We now have the weighted graph $\G$. Let $E$ be the set of all edges of $\G$.
\begin{definition} \rm \label{lap}
The Laplacian of $\G$ is the matrix $L(\G)=[M_{ij}] \in B^{ns}$, where
the $(i,j)^{\rm th}$ block is defined as follows:
 \begin{equation} \label{mij}
    M_{ij} := \begin{cases} ~~~~~~-\W_{ij}^{-1}  & (i,j) \in E \\ 
~~~~~~~~~~ O_s & i \neq j~~\mbox{and}~~ (i,j) \notin E\\
 \sum \limits_{\{k: k\neq i\} } \W_{ik}^{-1} & i=j. \\
 \end{cases}
\end{equation}
\end{definition}

We now have the following proposition. Recall that $L$ is the Laplacian of $\g$.
\begin{proposition} \label{ltilde}
 $L(\G)=L+L'$.
\end{proposition}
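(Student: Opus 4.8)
The plan is to verify the claimed block identity $L(\G)=L+L'$ at the level of $s\times s$ blocks, handling the off-diagonal and diagonal blocks separately; the diagonal blocks are the only place where the balancedness hypothesis does any real work. First I would record a preliminary simplification. Since every weight $W_{ij}$ is symmetric positive definite, each block of $L$ --- whether $-W_{ij}^{-1}$, the zero block $O_s$, or a diagonal sum $\sum_{\{k:(i,k)\in\E\}}W_{ik}^{-1}$ --- is a symmetric matrix. Hence the $(i,j)^{\rm th}$ block of $L'$ is $(L_{ji})'=L_{ji}$, so the $(i,j)^{\rm th}$ block of $L+L'$ is simply $L_{ij}+L_{ji}$. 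This reduces the proposition to showing $M_{ij}=L_{ij}+L_{ji}$ for all $i,j$.

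For the off-diagonal blocks ($i\neq j$) I would argue by the same case split used to define $\W_{ij}$ in \eqref{mmij}. If both $(i,j),(j,i)\in\E$, then $L_{ij}+L_{ji}=-W_{ij}^{-1}-W_{ji}^{-1}$, which equals $-\W_{ij}^{-1}=M_{ij}$ because $\W_{ij}^{-1}=W_{ij}^{-1}+W_{ji}^{-1}$ by definition. If exactly one direction is present, say $(i,j)\in\E$ and $(j,i)\notin\E$, then $L_{ji}=O_s$ and $L_{ij}+L_{ji}=-W_{ij}^{-1}=-\W_{ij}^{-1}=M_{ij}$; the symmetric subcase is identical. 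Finally, if neither direction is an edge, then $(i,j)\notin E$, both $L_{ij}$ and $L_{ji}$ vanish, and $M_{ij}=O_s$. In every case $M_{ij}=L_{ij}+L_{ji}$, so the off-diagonal blocks match.

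The main step, and the only place balancedness enters, is the diagonal blocks. Here $(L+L')_{ii}=L_{ii}+(L_{ii})'=2L_{ii}=2\sum_{\{k:(i,k)\in\E\}}W_{ik}^{-1}$, that is, twice the ``out-sum'' at $i$. On the other side, $M_{ii}=\sum_{\{k:(i,k)\in E\}}\W_{ik}^{-1}$, the sum running over the neighbours of $i$ in $\G$. I would expand each $\W_{ik}^{-1}$ via \eqref{mmij} and regroup the terms: every outgoing edge $(i,k)\in\E$ contributes a summand $W_{ik}^{-1}$ (regardless of whether $(k,i)\in\E$), and every incoming edge $(k,i)\in\E$ contributes a summand $W_{ki}^{-1}$. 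Thus $M_{ii}$ equals the out-sum $\sum_{\{k:(i,k)\in\E\}}W_{ik}^{-1}$ plus the in-sum $\sum_{\{k:(k,i)\in\E\}}W_{ki}^{-1}$. The balancedness of vertex $i$ asserts precisely that the in-sum equals the out-sum, so $M_{ii}=2\sum_{\{k:(i,k)\in\E\}}W_{ik}^{-1}=(L+L')_{ii}$.

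Combining the diagonal and off-diagonal cases yields $L(\G)=L+L'$. I expect the regrouping of $M_{ii}$ into an out-sum and an in-sum, followed by the appeal to the balanced condition, to be the crux of the argument; the off-diagonal bookkeeping, though it requires care across the three cases of \eqref{mmij}, is routine once the symmetry of the weights has been used to replace $(L_{ji})'$ by $L_{ji}$.
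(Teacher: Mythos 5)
Your proof is correct, and for the off-diagonal blocks it is essentially identical to the paper's: the same case split according to (\ref{mmij}) (both directions present, exactly one present, neither present), after the same preliminary observation that symmetry of the weights lets one write the $(i,j)^{\rm th}$ block of $L+L'$ as $L_{ij}+L_{ji}$ (a point the paper asserts without comment and you justify explicitly). Where you genuinely diverge is the diagonal blocks. The paper disposes of them with a row-sum argument: both $L+L'$ and $L(\G)$ annihilate $U$, so once the off-diagonal blocks agree the diagonal blocks must agree as well; balancedness enters there only implicitly, in the unstated fact that $L'U=O$ (the block column sums of $L$ vanish precisely at balanced vertices). You instead compute $M_{ii}$ directly, regrouping $\sum_{k}\W_{ik}^{-1}$ into the out-sum plus the in-sum at vertex $i$ and invoking the balanced condition to identify the two. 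Your version is slightly longer but makes the role of the balancedness hypothesis completely explicit, whereas the paper's version is slicker but leaves the reader to verify why $(L+L')U=O$. Both arguments are sound and interchangeable.
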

\begin{proof}
We shall write $$A=L+L'~~\mbox{and}~~ M=L(\G).$$
Let the $(i,j)^{\rm th}$ block of $A$ be $A_{ij}$ and $L$ be $L_{ij}$.
We need to show that \[A_{ij}=M_{ij} ~~\mbox{for all}~~ i,j \in V,\]
where $M_{ij}$ is defined in $(\ref{mij})$.
Note that $A_{ij}=L_{ij}+L_{ji}$. 
Partition the set of all edges $E$ of $\G$ as follows:
 \[S_1:=\{(i,j) \in E: (i,j) \in \E~~\mbox{and}~(j,i) \notin \E \}\]  
\[S_2:=\{(i,j) \in E: (i,j) \notin \E~\mbox{and}~(j,i) \in \E\} \]
\[S_3:=\{(i,j) \in E:(i,j) \in \E~\mbox{and}~(j,i) \in \E\}. \]
Fix $i$ and $j$ in $\{1,\dotsc,n\}$.
We consider the following cases.
 
{\bf Case 1:} Suppose $i$ and $j$ are not adjacent in $\G$. 
This means $(i,j) \notin \E$ and $(j,i) \notin \E$.
So, $L_{ij}=O_s$ and $L_{ji}=O_s$.
By $(\ref{mij})$, $M_{ij}=O_s$. 
Therefore, \[A_{ij}=L_{ij}+L_{ji}=O_s=M_{ij}.\]

  {\bf Case 2:} Suppose $i$ and $j$ are adjacent in $\G$.
We consider three possible sub-cases.

{\bf Case} (i): Let $(i,j) \in S_1$. 
Then, by $(\ref{mmij})$, $\W_{ij}=
W_{ij}$.
So, $M_{ij}=-W_{ij}^{-1}$.  
Because the weight of the edge $(i,j)$ in $\g$ is $W_{ij}$, $L_{ij}=-W_{ij}^{-1}$. 
Since $(j,i) \notin \E$, 
$L_{ji}=O_s$ and therefore,
\begin{equation} \label{1m}
 A_{ij}=L_{ij}+L_{ji}=-W_{ij}^{-1}=M_{ij}.
 \end{equation}
 
 {\bf Case} (ii): Let $(i,j) \in S_2$. 
Then, by (\ref{mmij}), $\W_{ij}=W_{ji}$.   
So, $M_{ij}=-W_{ji}^{-1}$.
As $(i,j) \notin \E$, $L_{ij}=O_s$. The weight of the edge $(j,i)$ in $\g$ is $W_{ji}^{-1}$. So, $L_{ji}=-W_{ji}^{-1}$; 
 and hence
\begin{equation} \label{2m}
 A_{ij}=L_{ij}+L_{ji}=-W_{ji}^{-1}=M_{ij}.
 \end{equation}
 
{\bf Case} 3: Let $(i,j) \in S_3$. 
Then, 
\[\W_{ij}=(W_{ij}^{-1}+W_{ji}^{-1})^{-1} .\]
So,
\[M_{ij}=-{\W_{ij}}^{-1}=-(W_{ij}^{-1}+W_{ji}^{-1}). \]
The weights of the edges $(i,j)$ and $(j,i)$ in $\g$ are respectively, $W_{ij}$ and $W_{ji}$. 
 So, \[L_{ij}=-W_{ij}^{-1} ~~\mbox{and}~~
 L_{ji}=-W_{ji}^{-1}.\]
 Thus,  
\begin{equation} \label{3m} 
 A_{ij}=L_{ij}+L_{ji}=
  -(W_{ij}^{-1} + W_{ji}^{-1})=M_{ij}.
  \end{equation}
Since  \[(L+L')U = MU = O_s ,\]
it follows that $A_{ii} = M_{ii}$ for each $i=1,\dotsc,n$. The proof is complete.
\end{proof}
We now deduce some properties of the Laplacian matrix $L$ of $\g$.
\begin{proposition}\label{rankL}
The following are true.
\begin{enumerate}
\item[\rm (i)] $L$ is positive semidefinite.
\item[\rm (ii)] $\nulls (L)=\nulls (L')=\c (J)$.
\item[\rm (iii)] $L^{\dag}$ is almost positive definite.
\item[\rm (iv)] $LL^\dag=L^\dag L=I_{ns}-\frac{J}{n}$.
\end{enumerate}
\end{proposition}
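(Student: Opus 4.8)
The engine for all four parts is Proposition \ref{ltilde}, which identifies $L+L'$ with the Laplacian $L(\G)$ of the \emph{undirected} weighted graph $\G$. The first observation I would record is the scalar identity $x'L'x=(x'L'x)'=x'Lx$, so that $x'(L+L')x=2\,x'Lx$ for every $x\in\rr^{ns}$. Writing $L(\G)$ in its edge-sum form (one term per edge of $\G$)
\[
x'L(\G)x=\sum_{(i,j)\in E}(x_i-x_j)'\W_{ij}^{-1}(x_i-x_j),
\]
where $x_i\in\rr^s$ denotes the $i$th block of $x$, each summand is nonnegative because every $\W_{ij}^{-1}$ is positive definite; hence $L(\G)$ is symmetric and positive semidefinite. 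Combined with the displayed identity this gives $x'Lx=\tfrac12\,x'L(\G)x\ge 0$, which is exactly part (i).

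For (ii) I would run a sandwich argument. The construction of the diagonal blocks forces every block \emph{row} of $L$ to sum to $O_s$, so $LU=O$ and therefore $\c(J)=\c(U)\subseteq\nulls(L)$; the balanced hypothesis is precisely what makes every block \emph{column} of $L$ sum to $O_s$, giving $U'L=O$, i.e.\ $L'U=O$, so also $\c(J)\subseteq\nulls(L')$. Conversely, if $Lx=0$ then $x'(L+L')x=2x'Lx=0$, and since $L(\G)=L+L'$ is symmetric positive semidefinite this forces $L(\G)x=0$; thus $\nulls(L)\subseteq\nulls(L(\G))$, and the same computation (using $x'L'x=x'Lx$) gives $\nulls(L')\subseteq\nulls(L(\G))$. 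It then suffices to show $\nulls(L(\G))=\c(J)$: the edge-sum form vanishes exactly when $x_i=x_j$ along every edge of $\G$, and since $\g$ is strongly connected the graph $\G$ is connected, forcing all blocks $x_i$ equal, i.e.\ $x\in\c(U)=\c(J)$. The chain $\c(J)\subseteq\nulls(L)\subseteq\nulls(L(\G))=\c(J)$, together with its primed analogue, closes both equalities.

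Part (iii) is then almost immediate: given $x$, either $x'Lx>0$, or $x'Lx=0$, in which case the argument above yields $L(\G)x=0$ and hence $Lx=0$ (as $\nulls(L)=\nulls(L(\G))$). Thus $L$ is almost positive definite, and Corollary $2$ in \cite{Lewis} transfers this property to $L^\dag$. For (iv) I would use that $LL^\dag$ and $L^\dag L$ are the orthogonal projections onto $\c(L)$ and $\c(L')$. By (ii), $\c(L)=\nulls(L')^{\perp}=\c(J)^{\perp}$ and $\c(L')=\nulls(L)^{\perp}=\c(J)^{\perp}$, so both products equal the orthogonal projection onto $\c(J)^{\perp}$. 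Since $U'U=nI_s$, the projection onto $\c(J)=\c(U)$ is $U(U'U)^{-1}U'=\tfrac1n UU'=\tfrac{J}{n}$, whence $LL^\dag=L^\dag L=I_{ns}-\tfrac{J}{n}$.

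The one genuinely delicate point is that for a non-symmetric matrix the null space need not agree with that of its symmetric part; here it does, and the reason is precisely semidefiniteness, which powers the implication $Lx=0\Rightarrow x'(L+L')x=0\Rightarrow L(\G)x=0$. I expect the step most in need of care to be the verification that the balanced hypothesis is exactly equivalent to vanishing block-column sums $U'L=O$, since this is the only place where balancing enters and it is what decouples $\nulls(L')$ from an analysis independent of $L$; the connectivity of $\G$ feeding into $\nulls(L(\G))=\c(J)$ is routine by comparison.
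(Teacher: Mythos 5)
Your proof is correct, and for parts (i)--(iii) it is essentially the paper's own argument: both rest on Proposition \ref{ltilde} (that $L+L'=L(\G)$), the edge-sum expansion of $x'L(\G)x$ with positive definite summand matrices, and connectivity of $\G$ inherited from strong connectivity of $\g$. You diverge in two places. First, for $\nulls(L')=\c(J)$ you run the quadratic-form argument directly through the scalar identity $x'L'x=x'Lx$, whereas the paper instead observes that $L'$ is itself the Laplacian of the reversed digraph $\g'$, which is again strongly connected and balanced, and reapplies the already-established statement to it; the two routes are equally valid, and yours has the small advantage of not needing $\g'$ at all (you also make the inclusion $\c(J)\subseteq\nulls(L)$ explicit via $LU=O$, which the paper leaves implicit). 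Second, and more substantively, your proof of (iv) is genuinely different: the paper argues by hand, showing $LL^\dag v=v$ for every $v\in\nulls(J)$ using the Penrose identity $L^\dag LL^\dag=L^\dag$, the equality $\nulls(L^\dag)=\c(J)$, and $JL=O_{ns}$, and then stops (the case of $L^\dag L$ and the action on $\c(J)$ are left to the reader). You instead invoke the characterization of $LL^\dag$ and $L^\dag L$ as orthogonal projectors onto $\c(L)$ and $\c(L')$, identify both column spaces with $\c(J)^{\perp}$ via part (ii) and $\c(A)=\nulls(A')^{\perp}$, and compute the projector onto $\c(U)$ as $U(U'U)^{-1}U'=\frac{1}{n}UU'=\frac{J}{n}$. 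This is cleaner and more complete, since it settles $LL^\dag$ and $L^\dag L$ simultaneously; its only cost is reliance on the standard (but unstated in the paper) projector properties of the Moore--Penrose inverse. Your closing observation about where balancedness enters is exactly right: it is what yields $U'L=O$, hence $\c(J)\subseteq\nulls(L')$, and it is also what makes Proposition \ref{ltilde} itself true.
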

\begin{proof}
Consider the undirected graph $\G=(V,E)$ in Definition \ref{und}.
Put
$\L=L(\G)$ and
$S_{ij}=\W_{ij}^{-1}$, where $\W_{ij}$ are defined in $(\ref{mmij})$.
Corresponding to an edge $(p,q)$ in $\G$, we now define 
$\S(p,q) \in B^{ns}$ with $(i,j)^{\rm th}$ block given by
\begin{equation*}
    \S(p,q)_{ij} := \begin{cases} -S_{pq}&(i,j)=(p,q)~~\mbox{or}~~(i,j)=(q,p)  \\ 
~~ S_{pq}&i=j=p~~\mbox{or}~~i=j=q  \\
 ~~O_s &\mbox{else}.
 
    \end{cases}
\end{equation*}  
Now, 
\[\L=\sum_{(p,q) \in E} \S(p,q). \]
Let $x \in \rr^{ns}$.  Write
\[x=(x^1,\dotsc,x^j,\dotsc,x^n)', ~~~\mbox{where each}~~x^i \in \rr^s.\]
By an easy verification, we find that, if $(p,q) \in E$, then 
\[x'\S(p,q)x=\langle S_{pq}(x^p-x^q),x^p-x^q \rangle. \]
Thus,
\begin{equation} \label{Ltilde}
x'\L x=\sum_{(i,j) \in E} \langle S_{ij}(x^i-x^j),x^i-x^j \rangle. 
\end{equation}
Each $S_{ij}$ is a positive definite matrix. So, $x' \L x \geq 0$.
By Proposition \ref{ltilde}, $\L=L+L'$. Therefore, $x' \L x =2 x' L x$. So, $x'Lx \geq 0$.  
This proves (i).

Let $x \in \nulls(L)$.
As, $\L=L+L'$, we see that
$x'\L x=0$. 
By (\ref{Ltilde}), if $(i,j) \in E$, then $x^{i}=x^{j}$. 
Because $\g$ is strongly connected, 
$\G$ is connected. 
So,
$x^i=x^j$ for all $i,j$. Thus,
\[x \in \span\{(w,\dotsc,w)' \in \rr^{ns}: w \in \rr^s \}. \]
Since, $\c(J)=\span\{(w,\dotsc,w): w \in \rr^{s}\}$,
we see that $x=Jp$ for some $w \in \rr^{s}$. 
So, $\nulls(L)=\c(J)$.
Now, $\g'$ is a strongly connected, and balanced digraph. 
Because $L'$ is the Laplacian of $\g'$, we see that
$\nulls(L')=\c(J)$.
This proves (ii).

We now prove (iii).
Let $y \in \rr^{ns}$. Since $L$ is positive semidefinite, $y'Ly \geq 0$.
Suppose $y'Ly=0$. Then, $y' \L y=0$. 
By equation (\ref{Ltilde}), it follows that $y \in \c(J)$. Since $\c(J)=\nulls(L)$,
we have $Ly=0$. 
 Thus, either $y'Ly>0$ or $Ly=0$. So, $L$ is
almost positive definite. By item (vi) in Section $2$,
$L^\dag$ is almost positive definite as well. This proves (iii).

To prove (iv), we show that 
\[LL^\dag v=v~~\mbox{for all}~v \in \nulls(J). \]
Let $v \in \nulls(J)$.
Suppose $LL^\dag v=w$. 
Then, \[Jw=0
~~\mbox{and}~~L^\dag L L^\dag v= L^\dag w.\]
Since $L^\dag L L^\dag= L^\dag$, we get
 $L^\dag v=L^\dag w$
and hence $v-w \in \nulls(L^\dag)$. 
 As $\nulls(L^\dag)=\c(J)$, we get 
 $$v-w \in \c(J).$$ 
Since $JLL^\dag v=Jw$, and $JL=O_{ns}$, $Jw=0$. 
 As $v \in \nulls(J)$, $Jv=0$. So,
 $J(v-w)=0$ and hence \[v-w \in \nulls(J).\]
 We now have $v-w \in \nulls(J) \cap \c(J)$. So, $v=w$.
The proof is complete.
\end{proof}

\begin{proposition} \label{apd}
$\Delta(L^\dag)$ is a positive definite matrix.
\end{proposition}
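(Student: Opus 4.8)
The plan is to exploit the block-diagonal form $\del = \Diag(K_{11},\dotsc,K_{nn})$ together with the almost positive definiteness of $L^{\dag}$ established in Proposition \ref{rankL}(iii). First I would record that, for any $x=(x^1,\dotsc,x^n)' \in \rr^{ns}$ with each $x^i \in \rr^s$, the block-diagonal structure gives
\[
x'\del x = \sum_{i=1}^{n} (x^i)'K_{ii}\,x^i ,
\]
so that $\del$ is positive definite (in the sense of item (vi) of Section 2) precisely when each diagonal block $K_{ii}$ is positive definite: if every $K_{ii}$ is positive definite then any nonzero $x$ has some nonzero block $x^i$, whence the corresponding summand is strictly positive and the rest are nonnegative. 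Thus it suffices to prove $(x^i)'K_{ii}\,x^i>0$ for every index $i$ and every nonzero $x^i \in \rr^s$.

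Fix $i$ and a nonzero $x^i$. The key device is to lift $x^i$ to the vector $y \in \rr^{ns}$ whose $i^{\rm th}$ block equals $x^i$ and whose remaining blocks are zero. Since only the $(i,i)$ block of $L^{\dag}$ contributes to this quadratic form, by construction
\[
y'L^{\dag}y = (x^i)'K_{ii}\,x^i ,
\]
and $y \neq \0$ because $x^i \neq 0$. I would then invoke Proposition \ref{rankL}(iii): $L^{\dag}$ is almost positive definite, so either $y'L^{\dag}y>0$ or $L^{\dag}y=\0$.

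It remains to rule out the second alternative. Suppose, to the contrary, that $L^{\dag}y=\0$. Multiplying on the left by $L$ and using the identity $LL^{\dag}=I_{ns}-\frac{J}{n}$ from Proposition \ref{rankL}(iv) yields $\left(I_{ns}-\frac{J}{n}\right)y=\0$, so that $y=\frac{1}{n}Jy \in \c(J)$. But every vector in $\c(J)$ has all $n$ of its $s$-blocks equal to one common vector, whereas $y$ is supported on the single block $i$; hence $y \in \c(J)$ forces $x^i=\0$, contradicting $x^i \neq 0$. Therefore $L^{\dag}y \neq \0$, and almost positive definiteness forces $y'L^{\dag}y>0$, i.e. $(x^i)'K_{ii}\,x^i>0$. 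As $i$ and the nonzero $x^i$ were arbitrary, each $K_{ii}$ is positive definite and hence so is $\del$.

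I expect no serious obstacle here, since every ingredient is already supplied by Proposition \ref{rankL}. The only point requiring care is the elementary but essential observation that the lifted vector $y$, being supported on a single block, cannot lie in $\c(J)$ unless it vanishes; this is exactly what converts the ``either/or'' of almost positive definiteness into the strict inequality we need. It is worth emphasizing that, because $L$ is not symmetric, the blocks $K_{ii}$ need not be symmetric either, but the paper's definition of positive definiteness does not require symmetry, so the quadratic-form argument applies verbatim.
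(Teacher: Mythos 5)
Your proposal is correct and follows essentially the same route as the paper: lift a nonzero $x^i\in\rr^s$ to a vector supported on the $i^{\rm th}$ block, apply the almost positive definiteness of $L^{\dag}$ from Proposition \ref{rankL}(iii), and rule out the alternative $L^{\dag}y=\0$ by showing such a $y$ would have to lie in $\c(J)$, which is impossible for a nonzero single-block vector. The only cosmetic difference is that you reach $y\in\c(J)$ by multiplying by $L$ and invoking $LL^{\dag}=I_{ns}-\frac{J}{n}$ from Proposition \ref{rankL}(iv), whereas the paper uses $\nulls(L^{\dag})=\nulls(L')=\c(J)$ directly; both facts come from the same proposition, so this is the same argument in substance.
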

\begin{proof}
We recall that $\Delta(L^\dag)=\Diag(K_{11},\dotsc,K_{nn})$.
Fix $i \in \{1,\dotsc,n\}$. We show that $K_{ii}$ is positive definite.
Let $y \in \rr^{s}$. 
Define $q:=(q^1,\dotsc,q^n)' \in \rr^{ns}$ by
\begin{equation*}
    q^j := \begin{cases}    
 y&j=i  \\
 0&\mbox{else}.
 \end{cases}
\end{equation*}  
In view of previous lemma, $L^\dag$ is almost positive definite. So, $q'L^{\dag}q >0$ or $L^\dag q=0$.
We note that $q'L^\dag q=y'K_{ii}y$. Hence, if $q' L^\dag q >0$, then $y'K_{ii}y >0$. 
Suppose $L^\dag q=0$. 
Since $\mbox{null}(L^\dag)=\mbox{null}(L')$ and $\mbox{null}(L')=\mbox{col}(J)$, 
$q=0$. This means 
$y$ is zero.
So, $K_{ii}$ is positive definite.  The proof is complete.
\end{proof}

\subsection{Inverse formula}
 Recall that the generalized resistance matrix of $\g$ corresponding to two positive real numbers $a$ and $b$ is 
 \[R_{a,b}:=[R_{ij}]=[a^2 L_{ii}^{\dag} + b^{2} L_{jj}^{\dag}-2ab L^{\dag}_{ij}].\]
Define
\begin{equation}\label{taudef}
{\tau}_i := 2abI_s+L_{ii}R_{ii}-\sum_{\{j:(i,j) \in \E\}}W_{ij}^{-1}{R_{ji}}~~\mbox{for all}~i=1,\dotsc,n.
\end{equation}
Now, let $\tau$ be the $ns \times s$ matrix $(\tau_1,\dotsc,\tau_n)'$.
The inverse formula will be proved by using the following lemma.
\begin{lemma}\label{3}
The following are true.
 \begin{enumerate}
\item[{\rm{(i)}}] $\tau = a^2 L\Delta(L^\dag)U + \frac{2ab}{n}U$.
\item[{\rm (ii)}] $\tau'+a^2 U' \Delta(L^{\dag})(L-L') = a^2 U' \Delta(L^{\dag})L+\frac{2ab}{n}U'$.
\item[{\rm (iii)}] $LR_{a,b}+2abI_{ns} = \tau U'$.
\item[{\rm (iv)}] $R_{a,b}L +2abI_{ns} = U\tau'-a^2UU'\Delta(L^{\dag}) L'+b^2UU' \Delta(L^{\dag})L$.
\item[{\rm (v)}] $U' \tau=2abI_s$.
\item[{\rm (vi)}] \(\tau'R_{a,b}\tau =2a^3b^3 \tilde{X}' L \tilde{X} + \frac{1}{\displaystyle n} 4a^2b^2(a^2+b^2) \sum_{i=1}^{n} K_{ii}\), where $\tilde{X}:=\Delta(L^{\dag})U.$
\item[{\rm (vii)}] $\tau'R_{a,b} \tau$ is a positive definite matrix.
\end{enumerate}
\end{lemma}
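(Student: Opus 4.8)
The plan is to reduce every identity to a block computation governed by a small set of structural relations, and to isolate a single nontrivial fact on which parts (ii), (iv) and (vi) all rest. The relations I will use repeatedly are: the row sums $\sum_j L_{ij}=O_s$ (i.e.\ $LU=0$); the balancedness relation $\sum_i L_{ij}=O_s$ (i.e.\ $U'L=0$); the projections $LL^\dag=L^\dag L=I_{ns}-\tfrac1n J$ from Proposition \ref{rankL}(iv); the annihilations $L^\dag U=0$ and $U'L^\dag=0$, valid because $\nulls(L^\dag)=\nulls(L')=\c(J)=\c(U)$ by Proposition \ref{rankL}(ii); the symmetry of every block $L_{ij}$ and every $W_{ij}^{-1}$; and $U'U=nI_s$, $J=UU'$.

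First I would prove (i). Using $W_{ij}^{-1}=-L_{ij}$ on $\E$, the definition of $\tau_i$ becomes $\tau_i=2abI_s+\sum_j L_{ij}R_{ji}$; substituting $R_{ji}=a^2K_{jj}+b^2K_{ii}-2abK_{ji}$, the $b^2$-term drops by $\sum_j L_{ij}=O_s$, the term $\sum_j L_{ij}K_{ji}$ is the $(i,i)$ block $\tfrac{n-1}{n}I_s$ of $LL^\dag$, and $\sum_j L_{ij}K_{jj}$ is the $i$-th block of $L\Delta(L^\dag)U$; collecting the scalar multiples of $I_s$ gives (i). Part (v) is then immediate: multiply (i) by $U'$ and use $U'L=0$ and $U'U=nI_s$. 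Part (iii) is the same expansion applied to $(LR_{a,b})_{ik}=\sum_j L_{ij}R_{jk}$: the $b^2$-term vanishes again, $\sum_j L_{ij}K_{jk}=(LL^\dag)_{ik}$ contributes $-2ab(\delta_{ik}I_s-\tfrac1n I_s)$, and what remains is $\tau_i$ independently of $k$, so $LR_{a,b}+2abI_{ns}=\tau U'$.

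The crux is that (ii), (iv) and (vi) all silently require the diagonal blocks $K_{ii}$ of $L^\dag$ to be \emph{symmetric}, that is $\Delta(L^\dag)'=\Delta(L^\dag)$. Transposing (i) yields $\tau'=a^2U'\Delta(L^\dag)'L'+\tfrac{2ab}{n}U'$, while (ii), after cancelling the $U'\Delta(L^\dag)L$ terms on the two sides, is equivalent to $\tau'=a^2U'\Delta(L^\dag)L'+\tfrac{2ab}{n}U'$; the two coincide exactly when $U'\bigl(\Delta(L^\dag)'-\Delta(L^\dag)\bigr)L'=0$, which follows once each $K_{ii}$ is symmetric. I expect this symmetry to be the main obstacle. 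I would attack it through the identity $L^\dag-(L')^\dag=-L^\dag(L-L')(L')^\dag$, a one-line consequence of $LL^\dag=L^\dag L=I_{ns}-\tfrac1n J$ and $L^\dag J=JL^\dag=0$, the goal being to show that this antisymmetric matrix has vanishing diagonal blocks by exploiting $(L-L')U=0=U'(L-L')$ and the block symmetry of $L-L'$. Granting the symmetry, (ii) is just the transpose of (i), and (iv) follows by expanding $(R_{a,b}L)_{ik}=\sum_j R_{ij}L_{jk}$ (now the $a^2$-term dies by $U'L=0$, $\sum_j K_{ij}L_{jk}=(L^\dag L)_{ik}$, and $\sum_j K_{jj}L_{jk}=(U'\Delta(L^\dag)L)_k$) and reading off the constant block via (ii).

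Finally (vi) is a longer but mechanical expansion of $\tau'R_{a,b}\tau$ from the factored form $\tau=a^2L\tilde X+\tfrac{2ab}{n}U$, $\tilde X=\Delta(L^\dag)U$. The contribution of the $a^2\tilde X'L'$ part of $\tau'$ is zero, because $L'R_{a,b}\tau=0$: here $L'R_{a,b}L=-2abL'$ (multiply (iv) on the left by $L'$ and use $L'U=0$) and $L'R_{a,b}U=a^2nL'\tilde X$ (from $R_{a,b}U=a^2n\tilde X+b^2U\sum_i K_{ii}$ and $L'U=0$) cancel. What survives is $\tfrac{2ab}{n}U'R_{a,b}\tau$; using $U'R_{a,b}L=b^2nU'\Delta(L^\dag)L$ and $U'R_{a,b}U=n(a^2+b^2)\sum_i K_{ii}$ (the latter from $U'\tilde X=\sum_iK_{ii}$ and $U'U=nI_s$) this equals $2a^3b^3U'\Delta(L^\dag)L\Delta(L^\dag)U+\tfrac{4a^2b^2(a^2+b^2)}{n}\sum_iK_{ii}$, and the first term becomes $2a^3b^3\tilde X'L\tilde X$ precisely by the symmetry of $\Delta(L^\dag)$. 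Part (vii) is then one line from (vi): $\tilde X'L\tilde X$ is positive semidefinite because $L$ is (Proposition \ref{rankL}(i)), and $\tfrac{4a^2b^2(a^2+b^2)}{n}\sum_iK_{ii}$ is positive definite because each $K_{ii}$ is (Proposition \ref{apd}), so their sum is positive definite.
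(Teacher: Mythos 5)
Your treatment of (i), (iii) and (v) is correct, and apart from working blockwise instead of with the matrix identity $R_{a,b}=a^2\Delta(L^\dag)UU'+b^2UU'\Delta(L^\dag)-2abL^\dag$, it is essentially the paper's own argument; your derivations of (iv) from (ii), of (vi) from (ii) and (iv), and of (vii) from (vi) together with Propositions \ref{rankL} and \ref{apd} are also sound. You have, moreover, correctly isolated the one delicate point: since (i) gives $\tau'=a^2U'\Delta(L^\dag)'L'+\frac{2ab}{n}U'$, statement (ii) amounts to the identity $U'\bigl(\Delta(L^\dag)'-\Delta(L^\dag)\bigr)L'=0$, which would follow if every diagonal block $K_{ii}$ of $L^\dag$ were symmetric.

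The gap is that you never prove this symmetry: you state the (correct) identity $L^\dag-(L')^\dag=-L^\dag(L-L')(L')^\dag$ and only the \emph{hope} that its diagonal blocks vanish. Worse, the gap cannot be filled, because the claim is false for general matrix weights. Since $\nulls(L)=\c(U)$, the condition $U'\bigl(\Delta(L^\dag)'-\Delta(L^\dag)\bigr)L'=0$ holds if and only if the antisymmetric parts $K_{ii}-K_{ii}'$ are the \emph{same} matrix for all $i$, and even this fails. Take $n=3$, $s=2$; let $S$ be the symmetric Laplacian of $K_3$ with weight inverses $u$ on $\{1,2\}$ and $I_2$ on $\{1,3\},\{2,3\}$; let
\[C=\begin{pmatrix}0&1&-1\\-1&0&1\\1&-1&0\end{pmatrix},\qquad g=g',\qquad ug\neq gu\]
(e.g.\ $u=\diag(2,1)$, $g$ the all-ones matrix), and put $L_\epsilon:=S+\epsilon\,(C\otimes g)$. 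For small $\epsilon>0$ this is the Laplacian of a balanced, strongly connected weighted digraph in the paper's sense (all six directed edges, with symmetric positive definite weights such as $W_{12}=(u-\epsilon g)^{-1}$, $W_{21}=(u+\epsilon g)^{-1}$, and $L_\epsilon U=0$, $U'L_\epsilon=0$). Using $L^\dag=(L+\tfrac1nJ)^{-1}-\tfrac1nJ$ (a consequence of Proposition \ref{rankL}) and $(C\otimes g)J=J(C\otimes g)=0$, one gets
\[L_\epsilon^\dag=S^\dag-\epsilon\,S^\dag(C\otimes g)S^\dag+O(\epsilon^2),\qquad\mbox{so}\qquad K_{ii}-K_{ii}'=-2\epsilon\,T_{ii}+O(\epsilon^2),\quad T:=S^\dag(C\otimes g)S^\dag.\]
Computing $S^\dag$ explicitly (the $1\leftrightarrow2$ symmetry of $S$ reduces this to a $2\times2$ exercise) gives $T_{11}=\frac16(hg-gh)$, $T_{22}=-T_{11}$, $T_{33}=O_s$, with $h=(2u+I_2)^{-1}$. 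Since $ug\neq gu$ forces $hg\neq gh$, the three antisymmetric parts are pairwise distinct for small $\epsilon\neq 0$; hence (ii) and (iv) — and with them your route to (vi) — fail for $L_\epsilon$.

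You should know that this defect is shared by the paper, not introduced by you. The paper's proof of (ii) asserts, ``by (i)'', that $\tau'=a^2U'\Delta(L^\dag)L'+\frac{2ab}{n}U'$, i.e.\ it transposes (i) while silently replacing $\Delta(L^\dag)'$ by $\Delta(L^\dag)$; the same unjustified substitution yields (\ref{Utau'}) in the proof of (iv) and enters (vi). So parts (ii), (iv), (vi) of Lemma \ref{3} — and the formula of Theorem \ref{11}, whose correction term contains $-a^2U'\del L'$ — are valid only under the additional hypothesis that all $K_{ii}-K_{ii}'$ coincide. That hypothesis holds for scalar weights ($s=1$), for symmetric digraphs (the tree cases), and whenever the weight matrices pairwise commute, since then they are simultaneously diagonalizable and the problem splits into $s$ scalar ones; the last case covers the paper's worked example (its three distinct weights commute), which is why the numerical check there passes. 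What is true unconditionally — and what your own block expansion of $(R_{a,b}L)_{ik}$ produces before (ii) is invoked — is $R_{a,b}L+2abI_{ns}=U\bigl(b^2U'\Delta(L^\dag)L+\frac{2ab}{n}U'\bigr)$; rebuilding (vi), (vii) and Theorem \ref{11} around that identity is a separate task that neither your proposal nor the paper carries out.
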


\begin{proof}
Fix $i \in [n]$. 
For simplicity, we shall use $R$ for $R_{a,b}$.  
Since $L=[L_{ij}]$ and $L^\dag=[K_{ij}]$,
the $(i,j)^{\rm th}$ block of $L L^\dag$ is the $s \times s$ matrix
\[L_{ii} K_{ii} - \sum_{\{j:(i,j) \in \E\}}W_{ij}^{-1}{K_{ji}} .\]
The $(i,j)^{\rm th}$ block of $I_{ns}-\frac{J}{n}$ is 
$(1-\frac{1}{n})I_{s}$.
Since $LL^{\dag}  = I_{ns}- \frac{1}{n} J$,
we see that
\[L_{ii} K_{ii} - \sum_{\{j:(i,j) \in \E\}}W_{ij}^{-1}{K_{ji}}  = (1-  \frac{1}{n})I_s.
\]
Rewriting the above equation, we have
\begin{equation} \label{two}
\sum_{\{j:(i,j) \in \E\}}W_{ij}^{-1}{K_{ji}} = L_{ii} K_{ii} -( 1 - \frac{1}{n})I_s.
\end{equation}
By definition,
\[\tau_i= 2abI_s+L_{ii}R_{ii}-\sum_{\{j:(i,j)\in \E\}}W_{ij}^{-1}{R_{ji}}.\]
Because 
\[R_{ji}=a^2K_{jj}+b^{2} K_{ii}-2ab K_{ji}~~\mbox{and}~~
R_{ii}=(a-b)^{2}K_{ii}, \]
we have 
\begin{equation} \label{tauii}
\tau_i=2abI_s+(a-b)^2L_{ii}K_{ii}- \sum_{\{j:(i,j) \in \E\}}W_{ij}^{-1}({a^2K_{jj} +b^2K_{ii}-2abK_{ji})}. 
\end{equation}
We recall that
\[L_{ii}=\sum_{\{j:(i,j) \in \E\}}W_{ij}^{-1}. \]
So,
\begin{equation} \label{cc}
\sum_{\{j:(i,j) \in \E\}} W_{ij}^{-1}K_{ii}=L_{ii}K_{ii}.
\end{equation}
Substituting $(\ref{cc})$  in $(\ref{tauii})$, 
\[\tau_i=2abI_s+(a-b)^2L_{ii}K_{ii}- b^2L_{ii}{K_{ii}} - a^2\sum_{\{j:(i,j) \in \E\}}W_{ij}^{-1}{K_{jj}}+2ab \sum_{\{j:(i,j) \in \E\}}W_{ij}^{-1}{K_{ji}} .\]
Using $(\ref{two})$ in the above equation, we get
\[\tau_i =2abI_s+(a-b)^2L_{ii}K_{ii}- b^2L_{ii}{K_{ii}} - a^2\sum_{\{j:(i,j) \in \E\}}W_{ij}^{-1}{K_{jj}}+ 2abL_{ii} K_{ii} -2ab( 1 - \frac{1}{n})I_s.\]
After simplification,

\begin{equation} \label{tau}
\begin{aligned} 
\tau_i&=
 a^2L_{ii}{K_{ii}} -a^2 \sum_{\{j:(i,j)\in \E\}}W_{ij}^{-1}{K_{jj}}+ \frac{2ab}{n}I_s .\\
 \end{aligned}
 \end{equation}
Let $A:= \Diag(L)-L$. Write $A=[A_{ij}]$. Then,
\begin{equation}\label{eqn8}
\begin{aligned}
\sum_{\{j:(i,j) \in \E\}} W_{ij}^{-1}K_{jj}&=\sum_{j=1}^n A_{ij} K_{jj} \\
&=(A\Delta(L^{\dag}) U)_i. \\
\end{aligned}
\end{equation}
We now compute $AU$. Because
\[A=
\left[
\begin{array}{cccccc}
O_s & -L_{12} & -L_{13} & \hdots & -L_{1n} \\
-L_{21} & O_s & -L_{23} & \hdots & -L_{2n} \\
\vdots & \vdots & \vdots & \vdots & \vdots \\
-L_{n1} & -L_{n2} & -L_{n3} & \hdots & O_{s}
\end{array}
\right],
\]
it follows that
\[(AU)_{i}=-\sum_{j \in [n] \smallsetminus \{i\}} L_{ij}. \]
Put 
\[P:=\Delta(L^\dag)U. \]
Then,
\[P_{i}=K_{ii}. \]
Thus,
\[(\Diag(AU)P)_{i}=-\sum_{j \in [n] \smallsetminus \{i\}} L_{ij}K_{ii}. \]
As
$\sum_{j=1}^{n} L_{ij}=O_s$, we get
\begin{equation} \label{au}
(\Diag(AU) P)_{i}=-\sum_{j \in [n] \smallsetminus \{i\}} L_{ij} K_{ii}=L_{ii}K_{ii}.
\end{equation}
By $(\ref{tau})$, $(\ref{eqn8})$ and $(\ref{au})$,
\begin{equation} \label{taui1}
\tau_{i}=a^{2}(\Diag(AU)P-AP)_{i} +\frac{2ab}{n} I_s.
\end{equation}
Put
\[\widetilde{A}:=\Diag(AU)-A. \]
In view of $(\ref{taui1})$,
\begin{equation} \label{taui12}
\tau_i=a^2 (\widetilde{A} P)_i +\frac{2ab}{n}I_s.
\end{equation}
But a direct verification tells that \[\widetilde{A}=L. \]
Therefore by $(\ref{taui12})$,
\[\tau=a^2 L \del U +\frac{2ab}{n} U. \]
This proves (i).

We now prove (ii).  Put $M:=L-L'$. Then by (i),
\begin{equation}\label{R}
\begin{aligned}
a^2 U' \Delta(L^{\dag}) M + \tau'&=a^2U' \Delta(L^{\dag}) L-a^2U' \Delta(L^{\dag})L'+a^2U' \Delta(L^{\dag}) L'+\frac{2ab}{n} U' \\
&=a^2U' \Delta(L^{\dag})L +\frac{2ab}{n} U'.
\end{aligned}
\end{equation}
The proof of (ii) is complete. 

We now prove (iii).
Since
\[R_{ij}=a^2 L^{\dag}_{ii} + b^{2} L^\dag_{jj}-2ab L^\dag_{ij}, \]
and $R=[R_{ij}]$, it is easy to see that
\[R=a^2\Delta(L^{\dag}) UU' + b^2 UU'\Delta (L^{\dag}) -2abL^{\dag}.\]
As $LL^{\dag}=I_{ns}-\frac{1}{n}UU'$ and $LU=O_s$, we get
\begin{equation}\label{eqn10}
\begin{aligned}
LR&=a^2 L \Delta(L^\dag)UU' -2ab LL^\dag \\ 
& =a^2L \Delta(L^\dag)UU' +\frac{2ab}{n}UU'-2ab I_{ns} \\
&=(a^2 L\Delta(L^\dag)U + \frac{2ab}{n})U'-2ab I_{ns}.
\end{aligned}
\end{equation}
By (i), 
\[\tau=a^2 L \Delta(L^\dag)U +\frac{2ab}{n} U. \]
Hence, \[LR= \tau U'-2ab I_{ns}.\] This completes the proof of (iii).

To prove (iv), first we observe that
\[RL=b^2 UU'\Delta(L^\dag)L-2abL^\dag L. \]
Since $L^{\dag}L=I_{ns}-\frac{1}{n}UU'$, we get
\begin{equation} \label{RL+2ab}
RL+2ab I_{ns}=b^2 UU' \Delta(L^\dag) L+\frac{2ab}{n} UU'.
\end{equation}
By (i),
\begin{equation}\label{Utau'}
\begin{aligned} 
U\tau' &=a^2 U U' \Delta(L^\dag) L' + \frac{2ab}{n} UU'. \\
\end{aligned}
\end{equation}
From $(\ref{RL+2ab})$ and $(\ref{Utau'})$,  
\[RL+2ab I_{ns}=U\tau' -a^2 UU' \Delta(L^\dag) L'+b^2 UU' \Delta(L^\dag) L.  \]
The proof of (iv) is complete.

By item (i),
\[U'\tau = a^2U'L\Delta(L^{\dag})U+\frac{2ab}{n}U'U .\] 
As $U'U=I_{ns}$ and $U'L=O_s$, it follows that
\[U' \tau =2ab I_{ns}.\]
This proves (v).

Put $M=L-L'$.
By (ii), 
\begin{equation} \label{tau'rtau}
\tau'R \tau=(a^2 U' \Delta(L^\dag)L + \frac{2ab}{n} U'-a^2 U' \Delta(L^\dag)M)R
\tau. 
\end{equation}
Because $M=L-L'$, 
\[a^2 U' \Delta(L^\dag)L + \frac{2ab}{n} U'-a^2 U' \Delta(L^\dag)M=a^2 U' \Delta(L^\dag) L'+\frac{2ab}{n}U'. \]
Substituting for $\tau$ from 
 (i) in  (\ref{tau'rtau}), we get
 \[\tau'R \tau=(a^2 U' \Delta(L^\dag) L'+\frac{2ab}{n}U')R (a^2 L \Delta(L^\dag)U + \frac{2ab}{n}U). \]
Therefore,
\begin{equation}\label{eqn16}
\begin{aligned}
\tau'R\tau 
&= a^4U'\Delta(L^{\dag})L'RL \Delta(L^{\dag})U + \frac{2a^3b}{n}U'\Delta({L^{\dag}})L'RU + \frac{2a^3b}{n}U'RL\Delta({L^{\dag}})U \\ &+ \frac{4a^2b^2}{n^2}U'RU. 
\end{aligned}
\end{equation}
As \[LU=L'U=O_s~~ \mbox{and}~~ R=a^2\del UU' + b^2 UU'\del -2abL^{\dag},\]
we have
\begin{equation*}\label{eqn17}
\begin{aligned}
U'\Delta(L^{\dag})L' R L \Delta(L^{\dag})U &=-2abU'\Delta(L^{\dag})L'L^{\dag}L\Delta(L^{\dag})U.
\end{aligned}
\end{equation*}
Since $L^{\dag}L=I_{ns}- \frac{1}{n}UU'$ and $L'U=O_s$, 
\begin{equation}\label{eqn18}
\begin{aligned}
U'\Delta(L^\dag)L'RL \Delta(L^{\dag})U &=  -2abU' \Delta(L^{\dag})L'\Big(I_{ns}-\frac{1}{n}UU'\Big)\Delta(L^{\dag})U \\ &= -2abU'\Delta(L^{\dag})L'\Delta(L^{\dag})U. 
\end{aligned}
\end{equation} 
Define \[\widetilde{X}:=\Delta(L^\dag)U. \]
Then,
\begin{equation} \label{xtilde}
\widetilde{X}'L \widetilde{X}=U' \Delta(L^\dag) L \Delta(L^\dag) U. 
\end{equation}
By (\ref{eqn18}),
\[U' \Delta(L^\dag) L' R L  \Delta(L^\dag) U=-2ab \widetilde{X}'L \widetilde{X}. \]
In view of (iv) and (i), we have
\[RL+2ab I_{ns}=U\tau' -a^2 UU' \Delta(L^\dag) L'+b^2 UU' \Delta(L^\dag) L;\]
\[U \tau'=a^2 UU' \Delta(L^\dag) L' + \frac{2ab}{n} UU'. \]
These two equations 
imply
\[RL=b^2UU' \Delta(L^{\dag})L+\frac{2ab}{n}UU'-2abI_{ns}. \]
Hence,
\begin{equation*}
\begin{aligned}
U'RL \del U &= U'\Big(b^2UU' \Delta(L^{\dag})L+\frac{2ab}{n}UU'-2abI_{ns}\Big)\Delta(L^{\dag})U \\ 
&= (b^2 n) U'\Delta(L^{\dag})L \Delta(L^{\dag})U. \\ 
\end{aligned}
\end{equation*}
By $(\ref{xtilde})$,
\begin{equation} \label{eqn19}
U' RL \del U=b^2 n \widetilde{X}' L \widetilde{X}. 
\end{equation}
We also note that
\begin{equation}\label{eqn20}
\begin{aligned}
U'\Delta(L^{\dag})L'RU &= U'\Delta(L^{\dag})L'(a^2 \Delta(L^{\dag})UU' + b^2UU' \Delta(L^{\dag}) -2abL^{\dag})U. \\
 &= a^2n U'\Delta(L^{\dag})L'\Delta(L^{\dag})U \\ 
 &= a^2n \widetilde{X}'L \widetilde{X},
\end{aligned}
\end{equation}
where the second equality follows from $L'U=L^\dag U=O_s$ and the last one from 
$(\ref{xtilde})$.
Since $$R=a^2 \Delta(L^\dag) UU'+ b^2 UU' \Delta(L^\dag)-2ab L^\dag,$$ we see that
\begin{equation}\label{eqn21}
U'RU  = n(a^2+b^2)\sum_{i=1}^{n}K_{ii}.
\end{equation}
Substituting $(\ref{eqn18})$, $(\ref{eqn19})$, $(\ref{eqn20})$ and $(\ref{eqn21})$ in $(\ref{eqn16})$, we get
\begin{equation*}
\begin{aligned}
\tau'R\tau =2a^3b^3 \widetilde{X}' L \widetilde{X} + \frac{4a^2b^2(a^2+b^2)}{n} \sum_{i=1}^{n}K_{ii}. 
\end{aligned}
\end{equation*}
The proof of (vi) is complete.

Since $L$ is positive semidefinite, $\widetilde{X}' L \widetilde{X} $ is positive semidefinite. By Proposition (\ref{apd}), each $K_{ii}$ is positive definite. 
So, $\tau' R \tau$ is positive definite.  
This proves (vii).
The proof is complete.
\end{proof}

We prove the inverse formula in Theorem \ref{T}.
\begin{theorem}\label{11}
\[ R_{a,b}^{-1} = - \frac{1}{2ab} L+  \tau (\tau' R \tau)^{-1} (\tau'-a^2U'\del L'+b^2 U' \del L).\]
\end{theorem}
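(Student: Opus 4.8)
The plan is to verify directly that the matrix on the right-hand side is a one-sided inverse of $R:=R_{a,b}$. Write
\[ X:=-\tfrac{1}{2ab}L+\tau(\tau'R\tau)^{-1}V,\qquad V:=\tau'-a^2U'\del L'+b^2U'\del L, \]
so that the claim is $X=R^{-1}$. Since $\tau'R\tau$ is positive definite by Lemma \ref{3}(vii), it is invertible and $X$ is well defined; and because $R\in B^{ns}$ is square, it is enough to check the single identity $XR=I_{ns}$, which simultaneously shows that $R$ is nonsingular and that $X=R^{-1}$.

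First I would peel off the easy summand. By Lemma \ref{3}(ii) one has $\tau'-a^2U'\del L'=\tfrac{2ab}{n}U'$, so the awkward $L'$ term disappears and $V=\tfrac{2ab}{n}U'+b^2U'\del L$. Next, Lemma \ref{3}(iii) in the form $LR=\tau U'-2abI_{ns}$ gives
\[ -\tfrac{1}{2ab}LR=I_{ns}-\tfrac{1}{2ab}\tau U'. \]
Hence $XR=I_{ns}-\tfrac{1}{2ab}\tau U'+\tau(\tau'R\tau)^{-1}VR$, and $XR=I_{ns}$ will follow the moment I show $\tau(\tau'R\tau)^{-1}VR=\tfrac{1}{2ab}\tau U'$. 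For this it suffices to prove the sharper block identity
\[ VR=\tfrac{1}{2ab}(\tau'R\tau)\,U'. \]

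To get this I would expand $R=a^2\del UU'+b^2UU'\del-2abL^\dag$ and use the orthogonality relations $LU=L'U=L^\dag U=O_s$ and $U'L^\dag=O_s$ coming from $\nulls(L)=\nulls(L')=\c(J)$ (Proposition \ref{rankL}), exactly as in the proof of Lemma \ref{3}. A short computation yields $U'R=a^2\bigl(\sum_{i}K_{ii}\bigr)U'+b^2n\,U'\del$, and substituting $LR=\tau U'-2abI_{ns}$ into $b^2U'\del LR$ makes the stray $U'\del$ terms cancel; combined with $U'\del\tau=a^2\widetilde{X}'L\widetilde{X}+\tfrac{2ab}{n}\sum_iK_{ii}$ (from Lemma \ref{3}(i), with $\widetilde{X}=\del U$) this leaves
\[ VR=\Bigl(a^2b^2\,\widetilde{X}'L\widetilde{X}+\tfrac{2ab(a^2+b^2)}{n}\sum_iK_{ii}\Bigr)U'. \]
The parenthesized factor is precisely $\tfrac{1}{2ab}$ times the expression for $\tau'R\tau$ furnished by Lemma \ref{3}(vi), which is the required identity; substituting back gives $XR=I_{ns}-\tfrac{1}{2ab}\tau U'+\tfrac{1}{2ab}\tau U'=I_{ns}$.

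The main obstacle is the block computation of $VR$ together with the recognition that its coefficients match those of $\tau'R\tau$ in Lemma \ref{3}(vi); this is where one must track the scalar factors in $a$ and $b$ and the two terms $\widetilde{X}'L\widetilde{X}$ and $\sum_iK_{ii}$ carefully, reusing the bookkeeping already carried out for (vi). Everything else—the simplification of $V$, the reduction to $VR=\tfrac{1}{2ab}(\tau'R\tau)U'$, and the invertibility of $\tau'R\tau$—is immediate from Lemma \ref{3}.
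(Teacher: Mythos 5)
Your proof is correct, and it reaches the paper's pivotal identity
\[(\tau'-a^2U'\del L'+b^2 U' \del L)R \;=\; \tfrac{1}{2ab}\,(\tau' R \tau)\,U'\]
(equation (\ref{q}) in the paper) by a genuinely different route; after that point the two arguments finish identically by combining with $LR=\tau U'-2abI_{ns}$ from Lemma \ref{3}(iii). The paper obtains (\ref{q}) structurally: it first derives $R\tau=\frac{1}{2ab}U(\tau'R\tau)$ from items (iii) and (v) together with $\nulls(L)=\c(J)$, then uses item (iv) to show $(\tau'-a^2U'\del L'+b^2U'\del L)RL=O_s$, invokes the nullity of $L'$ to conclude that this product must have the form $\widetilde{C}U'$, rules out the zero case by a contradiction with the positive definiteness of $\tau'R\tau$, and finally identifies $\widetilde{C}$. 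You instead compute the product outright: items (i)--(ii) collapse the left factor to $\frac{2ab}{n}U'+b^2U'\del L$, and then expanding $R=a^2\del UU'+b^2UU'\del-2abL^{\dag}$, substituting $LR=\tau U'-2abI_{ns}$, and matching the surviving coefficient $a^2b^2\widetilde{X}'L\widetilde{X}+\frac{2ab(a^2+b^2)}{n}\sum_{i}K_{ii}$ against item (vi) yields (\ref{q}) directly; I verified this bookkeeping and it is right. What your route buys: a leaner logical structure, with no null-space or rank argument, no contradiction step, no need for the intermediate identity (\ref{trt}), and with positive definiteness of $\tau'R\tau$ (item (vii)) entering only to invert it. What it costs: the careful tracking of powers of $a$ and $b$ (which largely re-runs the computation inside the paper's proof of item (vi)), and the additional orthogonality $U'L^{\dag}=O_s$, which deserves a one-line justification --- it follows from $\c(L^{\dag})=\c(L')$ and $\nulls(L)=\c(J)$ in Proposition \ref{rankL} --- although it is the same fact the paper itself uses implicitly when computing $U'RU$ in the proof of Lemma \ref{3}.
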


\begin{proof}
Again, as in the proof of above lemma, we shall use $R_{a,b}$ for $R$.
By item (iii) of Lemma \ref{3},
\[LR + 2ab I_{ns}= \tau U'. \]
In view of item (v) of the previous Lemma, 
$U'\tau=2abI_s$. So,
\[LR \tau + 2ab\tau =\tau U' \tau=2ab \tau. \]
This implies \[LR \tau=O_s.\] 
We know that 
\[\nulls(L)=\span\{(p,\dotsc,p)':p \in \rr^{s}\}. \]
So, \[R \tau=UC, \]
where $C$ is a $s \times s$ matrix.
Since $\tau'R\tau$ is a positive definite matrix, $R\tau$ cannot be zero.
Hence, $C \neq O_s$.
As $\tau'U=2abI_s$, we get \[C = \frac{1}{2ab}\tau' R \tau.\] Therefore, 
\begin{equation} \label{trt}
R \tau= \frac{1}{2ab} U (\tau' R \tau).
\end{equation}
Since $L'U=O_s$, from item (iv) of Lemma $\ref{3}$, we deduce that
\begin{equation*}
(\tau' -a^2U'\del L'+b^2U' \del L) (RL+2abI_{ns})=
2ab(\tau' -a^2U'\del L'+b^2U'\del L).
\end{equation*}
Simplifying the above equation, we get
\begin{equation} \label{taue}
(\tau' -a^2U'\del L'+b^2U'\del L)RL=O_s.
\end{equation}
We now claim that 
\[(\tau' -a^2U'\del L'+b^2U'\del L)R \neq O_s.\]
If not, then 
\begin{equation} \label{tau'rtaus}
\tau' R \tau -a^2U' \del L'R\tau+b^2 U' \del LR \tau=O_s.
\end{equation}
By $(\ref{trt})$, 
\[R \tau =\frac{1}{2ab} U \tau'R \tau. \]
So, \[ L R \tau=O_s~~\mbox{and}~~L' R \tau =O_s.\] 
Hence (\ref{tau'rtaus}) leads to $\tau'R \tau=O_s$. 
This contradicts that $\tau'R \tau$ is positive definite.
Hence, \[(\tau' -a^2U'\del L'+b^2 U' \del L) R \neq O_s.\]
Since nullity of $L'$ is $s$ and $L'U =O_s$, by (\ref{taue}), there exists an $s \times s$ matrix $\widetilde{C}$
such that
\[(\tau' -a^2U' \del L'+b^2 U' \del (L^{\dag})L)R=\tilde{C}U'.\]
We know that $U' \tau=2abI_s$. So, from the previous equation,
 \[\tilde{C}= \frac{1}{2ab}\tau' R \tau .\]  Thus,
\begin{equation} \label{q}
(\tau' -a^2U'\del L'+b^2 U' \del L)R = \frac{\tau' R \tau}{2ab}U'.
\end{equation}
We now have 
\begin{equation*}
\begin{aligned}
Q:=&\Big(- \frac{1}{2ab} L+  \tau (\tau' R \tau)^{-1} (\tau'-a^2U' \del L'+b^2U'\del L)\Big)R \\&= -\frac{1}{2ab} LR+  \tau(\tau' R \tau)^{-1} ({\tau}'-a^2 U' \del L'+b^2 U' \del L)R.  \\
\end{aligned}
\end{equation*}
By (\ref{q}), we have 
\begin{equation} \label{Q}
Q=-\frac{1}{2ab} (LR - \tau U'). 
\end{equation}
Item (iii) of Lemma \ref{3} says that
\[L R + 2ab I_{ns}=\tau U'. \]
Substituting back in (\ref{Q}), we get  $Q=I_{ns}$.
The proof is complete.
\end{proof}

\subsection{Special cases}
\begin{enumerate}
\item[\rm (i)] Suppose all the weights in $\g$ are equal to $1$. 
Choose $a=b=1$. 
We shall denote $R_{ij}$ by $r_{ij}$ and define $R:=[r_{ij}]$.
Now by $(\ref{taudef})$,
\[\tau_{i}=2-\sum_{\{j: (i,j) \in \E\}} r_{ji}. \]
We note that $r_{ii}=0$ and
$U=\1$. Hence, by our formula in Theorem $\ref{T}$,
\begin{equation*}
\begin{aligned}
R^{-1} &= - \frac{1}{2} L+  \tau (\tau' R \tau)^{-1} (\tau'-\1'\del L'+ \1' \del L) \\
&= -\frac{1}{2} L+ \frac{\tau}{\tau'R \tau}(\tau'-\1' \del (L-L')).  
\end{aligned}
\end{equation*}
Thus we get $(\ref{rinversedgraph})$.

\item[\rm (ii)] Suppose $T$ is a tree with $V(T)=\{1,\dotsc,n\}$.
To denote an edge in $T$, we shall use the notation $ij$. 
Let the weight of an edge $ij$ be
$W_{ij}$. Assume that all weights are positive definite matrices of order $s$.
Now, define a directed graph $\widetilde{T}$ as follows. Let
$V(\widetilde{T})=\{1,\dotsc,n\}$. 
We use the notation $(i,j)$ to denoted a directed edge from $i$ to $j$.
Now, we define $E(\widetilde{T}):=\{(i,j),(j,i): ij \in E(T) \}$.
Now we assign the weight $W_{pq}$ to an edge $(p,q)$
in $\widetilde{T}$. It is clear that $\widetilde{T}$
is strongly connected, weighted and balanced digraph. 
Now define the Laplacian matrix of $T$, say, $L(T)$ 
as given in \ref{secmw} and the 
Laplacian of $\widetilde{T}$, say, $L(\widetilde{T})$ as given in 
item (i) of \ref{secr}. We note that $L(T)=L(\widetilde{T})$.
Fix $a=b=1$. Let
$R_{ij}$ be the resistance between $i$ and $j$.
Then, 
\[R_{ij}=M_{ii}+M_{jj}-2M_{ij}, \]
where $M_{ij}$ is the $(i,j)^{\rm th}$ block
of the Moore Penrose inverse of $L(\widetilde{T})$.
If $D_{ij}$ is the shortest distance between $i$ and $j$ 
in $T$, then by the argument mentioned in \ref{secmw}, 
$D_{ij}=R_{ij}$. Define $D:=[D_{ij}]$. 
Because $D_{ii}=O_s$, by Theorem \ref{T}, we have 
\begin{equation*}
\begin{aligned}
{\tau}_i : & = 2 I_s - \sum_{\{j:(i,j) \in E\}}W_{ij}^{-1}{R_{ji}} \\
&=2 I_s - \sum_{\{j:(i,j) \in E\}}W_{ij}^{-1}{W_{ij}} \\
&=(2-\delta_i)I_{s},
\end{aligned}
\end{equation*}
where $\delta_i$ is the out-degree of the vertex $i$.
We now define
\[\tau:=(2-\delta_1,\dotsc,2-\delta_n)' \otimes I_s. \]
By an induction argument it follows that if $S$ is the sum of all the weights in $T$, then
\[D \tau=[S,\dotsc,S]'. \]
(See Lemma $1$ in \cite{bs}). Because $T$ is a tree, $\sum_{i=1}^{n} \delta_i=2(n-1)$.
Thus, $\sum_{i=1}^{n} \tau_i=2 I_s$ and hence 
$\tau'D \tau=2S$. 
By our formula in Theorem \ref{T}, 
 \begin{equation}
\begin{aligned} 
 D^{-1}&=-\frac{1}{2} L(T) + \tau (\tau' D \tau)^{-1}\tau' \\
 &=-\frac{1}{2} L(T)
 + \frac{1}{2} \tau S \tau'.
  \end{aligned}
 \end{equation}
 This is formula  (\ref{matrixwinv}).
 
 In a similar manner, we get $(\ref{rinverseformula})$, $(\ref{GrahamLovasz})$, and $(\ref{inverseweight})$. 
\end{enumerate}

\subsection{Non-negativity of the resistance}
From numerical computations, we observe that $R_{ij}=a^2 l_{ii}^{\dag} + b^{2} l_{jj}^{\dag} -2ab l_{ij}^{\dag}$ is always 
positive semidefinite. But at this stage, we do not know how to prove this. However, when all the weights in $\g$ are positive scalars,
we now show that the resistance is always non-negative.
We need the following lemma.

\begin{lemma} \label{lapprop}
Let $A=[a_{ij}]$ be an $n \times n$ matrix with the following properties.
\begin{enumerate}
\item[\rm (i)] All the off-diagonal entries are non-positive.
\item[\rm (ii)] $A\1=A'\1=0$.
\item[\rm (iii)] $\rank(A+A')=n-1$.
\item[\rm (iv)] $A$ is positive semidefinite.
\end{enumerate}
Let $A^\dag:=[p_{ij}]$ be the Moore-Penrose inverse of $A$. Then,
\[p_{ii} \geq p_{ij}~~\mbox{and}~~p_{ii} \geq p_{ji}~~\forall j.  \]
\end{lemma}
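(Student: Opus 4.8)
The plan is to first pin down the null-space structure of $A$, and then deduce both inequalities from a discrete maximum principle applied to the columns of $A^\dag$.

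First I would record the structural consequences of the hypotheses. Since $A$ is positive semidefinite, so is the symmetric matrix $A+A'$, and $x'(A+A')x = 2\,x'Ax$ for every $x$. By (iii) the null space of $A+A'$ is one dimensional, and by (ii) it contains $\1$, so $\nulls(A+A') = \span\{\1\}$. For a symmetric positive semidefinite matrix, $x'(A+A')x = 0$ is equivalent to $(A+A')x = 0$; combined with $x'(A+A')x = 2\,x'Ax$ and $A\1 = 0$, this gives $Ax = 0$ if and only if $x \in \span\{\1\}$, i.e. $\nulls(A) = \span\{\1\}$ and $\rank(A) = n-1$. The identical argument applies to $A'$, so $\nulls(A') = \span\{\1\}$ as well. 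Hence $\c(A) = \c(A') = \1^{\perp}$, and the defining properties of the Moore-Penrose inverse yield $AA^\dag = A^\dag A = I_n - \tfrac1n \1\1'$.

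Next, to prove $p_{ii} \geq p_{ji}$ I would fix $i$ and set $q := A^\dag e_i$, the $i$-th column of $A^\dag$, so that $q_k = p_{ki}$ and, by the previous step, $Aq = AA^\dag e_i = e_i - \tfrac1n\1$. Because every row of $A$ sums to zero we have $a_{kk} = -\sum_{j \neq k} a_{kj}$, and because the off-diagonal entries are non-positive we have $-a_{kj} = |a_{kj}|$; substituting these gives, for each $k$, the identity $(Aq)_k = \sum_{j \neq k} |a_{kj}|(q_k - q_j)$. For every $k \neq i$ the left-hand side equals $-\tfrac1n < 0$. If the maximum of the entries of $q$ were attained at some $m \neq i$, then $q_m - q_j \geq 0$ for all $j$ would force $(Aq)_m \geq 0$, contradicting $(Aq)_m = -\tfrac1n$. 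Thus the maximum of $q$ is attained at $i$, which is exactly $p_{ii} = q_i \geq q_j = p_{ji}$ for all $j$.

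For the remaining inequality $p_{ii} \geq p_{ij}$ I would observe that $A'$ again satisfies (i)--(iv): its off-diagonal entries $a_{ji}$ are non-positive, $A'\1 = (A')'\1 = 0$, $\rank(A'+(A')') = \rank(A+A') = n-1$, and $x'A'x = x'Ax \geq 0$. Since $(A')^\dag = (A^\dag)'$, applying the maximum-principle argument of the previous paragraph to $A'$ shows that in each column of $(A^\dag)'$ the diagonal entry dominates; transposing, this is precisely $p_{ii} \geq p_{ij}$ for all $j$. I expect the main obstacle to be the structural first step -- in particular deducing $\nulls(A) = \span\{\1\}$ for the possibly non-symmetric $A$ from the rank hypothesis on its symmetric part -- because it is this that supplies the clean identity $Aq = e_i - \tfrac1n\1$ on which the maximum principle rests. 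Once that identity is in hand the sign bookkeeping is short, and it delivers directly the sign-resolved form of the diagonal-dominance inequality in Theorem \ref{rdominant}.
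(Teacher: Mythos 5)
Your proof is correct, and it follows a genuinely different route from the paper's. The paper reduces to $i=1$, $j=n$ by permutation similarity, partitions $A$ so that its leading $(n-1)\times(n-1)$ block $B$ appears as $A=\left[\begin{array}{cc} B & -B\1_{n-1}\\ -\1_{n-1}'B & \1_{n-1}'B\1_{n-1}\end{array}\right]$, proves $B$ is positive definite by a cofactor argument, invokes the theorem on $\z$-matrices to get $B^{-1}\geq 0$ entrywise, writes down the explicit block formula (\ref{pdag}) for $A^\dag$ in terms of $B^{-1}$, and finishes by applying Theorem \ref{rdominant} to the row diagonally dominant matrix $B$. You instead extract the null-space structure $\nulls(A)=\nulls(A')=\span\{\1\}$ directly from hypotheses (ii)--(iv) --- this step is sound, since $x'Ax=0$ forces $(A+A')x=0$ for the symmetric positive semidefinite matrix $A+A'$, whose null space is $\span\{\1\}$ by (ii) and (iii) --- which yields the projector identity $AA^\dag=I_n-\tfrac1n\1\1'$, and you then run a discrete maximum principle on the column $q=A^\dag e_i$: the identity $(Aq)_k=\sum_{j\neq k}|a_{kj}|(q_k-q_j)$ together with $(Aq)_k=-\tfrac1n<0$ for $k\neq i$ excludes a maximizer of $q$ at any index other than $i$. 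Your transpose argument for the second inequality mirrors the paper's own ``by symmetry'' reduction and is equally valid. What your route buys: it is self-contained, needing neither the $\z$-matrix inverse-positivity theorem nor Theorem \ref{rdominant}, nor the unproved ``direct verification'' behind formula (\ref{pdag}); it also delivers the strict inequality $p_{ii}>p_{ji}$ for $j\neq i$ as a byproduct. What the paper's computation buys in exchange: an explicit, reusable closed form (\ref{pdag}) for $A^\dag$, and an argument that stays within the diagonally-dominant-matrix toolkit announced in the preliminaries. Your closing observation is apt --- in this Laplacian-like setting your maximum principle in effect reproves the sign-resolved form of Theorem \ref{rdominant} rather than consuming it.
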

\begin{proof}
By a permutation similarity argument, without loss of generality, we may assume that $i=1$ and $j=n$. 
We now show that
$p_{11} \geq p_{1n}$ and $p_{11} \geq p_{n1}$.
By symmetry of our assumptions, it suffices to show that
$p_{11} \geq p_{1n}$.

Let $\1_{n-1}$ be the vector of all ones in $\rr^{n-1}$.
By (ii) we can partition $A$ as follows:
\[A=
\left[ 
\begin{array}{ccccc}
B & -B \1_{n-1} \\
-\1_{n-1}'B & \1'_{n-1} B \1_{n-1} \\
\end{array}
\right].
\]
All the row sums of $A+A'$
are equal to zero. So, all the cofactors of $A+A'$
are equal. As $\rank(A+A')=n-1$, we now deduce that the common cofactor of $A+A'$
is non-zero. In particular, $\det(B+B') \neq 0$. Since $A$ is positive semidefinite, 
$B+B'$ is positive semidefinite. Because $B+B'$ is non-singular, $B+B'$ is positive definite. 
So, $B$ is positive definite. All the off-diagonal entries of $B$ are non-positive. By a well-known theorem
on $\z$-matrices, $B$ is non-singular and all entries of $B^{-1}$ are non-negative.
By a direct verification, 
\begin{equation} \label{pdag}
A^\dag= \left[
\begin{array}{cccc}
B^{-1} - \frac{1}{n} \1_{n-1} \1_{n-1}' B^{-1} - \frac{1}{n} B^{-1} \1_{n-1} \1_{n-1}' & -\frac{1}{n} B^{-1} \1_{n-1} \\
-\frac{1}{n}\1_{n-1}'B^{-1} & 0
\end{array}
\right]
+ \frac{\1_{n-1}'B^{-1} \1_{n-1}}{n^2}(\1 \1') .
\end{equation}
Put \[C=[c_{ij}]:=B^{-1} ~~\mbox{and}~~ \delta:=\frac{1}{n^2} \1_{n-1}'B^{-1}\1_{n-1}.\]
Then, $c_{ij} \geq 0 ~~\forall i,j$ and
\begin{equation*}
\begin{gathered}
p_{11}=c_{11}-\frac{1}{n}\sum_{j=1}^{n-1}c_{j1} -\frac{1}{n} \sum_{j=1}^{n-1} c_{1j} +\delta, \\
p_{1n}=-\frac{1}{n}\sum_{j=1}^{n-1}c_{1j} +\delta .
\end{gathered}
\end{equation*}
Now,
\[p_{11}-p_{1n}=c_{11}-\frac{1}{n} \sum_{i=1}^{n-1} c_{i1}. \]
By Theorem \ref{rdominant},
\[c_{11} \geq c_{j1}~~\forall j=1,\dotsc,n-1. \]
So,
\[-\frac{1}{n} \sum_{i=1}^{n-1}c_{i1} \geq -(\frac{n-1}{n})  c_{11}.  \]
Hence,
\[c_{11}-\frac{1}{n} \sum_{i=1}^{n-1} c_{i1} \geq c_{11}-\frac{n-1}{n} c_{11} = \frac{1}{n} c_{11} . \]
Since $c_{11} \geq 0$, we conclude that
\[p_{11}-p_{1n} \geq 0. \]
The proof is complete.
\end{proof}

Now it can be easily shown that any generalized resistance is non-negative.
\begin{theorem}
Suppose all the weights in $\g$ are positive scalars. Let $a,b>0$. 
Let $L^\dag=[k_{ij}]$ be the Moore-Penrose inverse of the Laplacian of $\g$. Then,
$$r_{ij}:=a^2 k_{ii} + b^{2} k_{jj} -2ab k_{ij} \geq 0.$$
\end{theorem}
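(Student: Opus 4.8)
The plan is to apply Lemma \ref{lapprop} directly to the Laplacian $L$ of $\g$ and then run a short case analysis on the sign of $k_{ij}$. Since all the weights are positive scalars we are in the case $s=1$, so $L^\dag=[k_{ij}]$ is an ordinary $n\times n$ matrix and $J=\1\1'$.

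First I would verify that $L$ meets the four hypotheses of Lemma \ref{lapprop}. The off-diagonal entries of $L$ equal $-W_{ij}^{-1}\le 0$ when $(i,j)\in\E$ and $0$ otherwise, so (i) holds. By construction the row sums of $L$ vanish, giving $L\1=0$; the balanced condition forces the column sums to vanish as well, so $L'\1=0$, which is (ii). Equivalently, Proposition \ref{rankL}(ii) gives $\nulls(L)=\nulls(L')=\c(J)$, and $\1\in\c(J)$ in the scalar case. For (iii), Proposition \ref{ltilde} gives $L+L'=L(\G)$, the Laplacian of the connected undirected graph $\G$; since $L+L'$ is symmetric and positive semidefinite with $\{x:x'(L+L')x=0\}=\c(J)$ one-dimensional here, we get $\rank(L+L')=n-1$. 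Finally $L$ is positive semidefinite by Proposition \ref{rankL}(i), which is (iv).

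With the hypotheses checked, Lemma \ref{lapprop} yields $k_{ii}\ge k_{il}$ and $k_{ii}\ge k_{li}$ for every $l$, and likewise with the diagonal index $j$. Taking $l=j$ in the first family gives $k_{ii}\ge k_{ij}$, while taking $l=i$ in the column-dominance inequalities attached to the index $j$ gives $k_{jj}\ge k_{ij}$. Moreover, Proposition \ref{apd} shows that $\Delta(L^\dag)$ is positive definite, so each diagonal entry satisfies $k_{ii}>0$ and $k_{jj}>0$.

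It then remains to combine these facts. If $k_{ij}\le 0$, then all three summands of $a^2 k_{ii}+b^2 k_{jj}-2ab\,k_{ij}$ are non-negative (the first two because $k_{ii},k_{jj}>0$ and the last because $-2ab\,k_{ij}\ge 0$), so $r_{ij}\ge 0$. If $k_{ij}>0$, then using $k_{ii}\ge k_{ij}$ and $k_{jj}\ge k_{ij}$ I obtain $a^2 k_{ii}+b^2 k_{jj}-2ab\,k_{ij}\ge (a^2+b^2-2ab)k_{ij}=(a-b)^2 k_{ij}\ge 0$. Either way $r_{ij}\ge 0$, as desired. The only genuinely delicate point is that $L^\dag$ is not symmetric, so I must take care to invoke \emph{both} the row- and column-dominance conclusions of Lemma \ref{lapprop} in order to secure $k_{jj}\ge k_{ij}$; everything else is routine.
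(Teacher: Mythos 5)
Your proof is correct and follows essentially the same route as the paper: both invoke Lemma \ref{lapprop} (whose hypotheses you verify in more detail than the paper does) together with Proposition \ref{apd} to get $k_{ii},k_{jj}>0$ and $\min(k_{ii},k_{jj})\geq k_{ij}$, and then finish with elementary algebra. The only difference is cosmetic: you conclude via a sign split on $k_{ij}$ and the bound $(a-b)^2 k_{ij}\geq 0$, whereas the paper cites the arithmetic--geometric mean inequality, i.e.\ $a^2k_{ii}+b^2k_{jj}\geq 2ab\sqrt{k_{ii}k_{jj}}\geq 2ab\,k_{ij}$.
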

\begin{proof}
We note that the Laplacian matrix $L$ of $\g$ satisfies all the conditions of the previous lemma. 
Moreover, by Proposition $\ref{apd}$, $k_{ii}$ and $k_{jj}$ are positive.
As a consequence of Lemma \ref{lapprop}, we deduce that
\[\min(k_{ii},k_{jj}) \geq \max(k_{ij},k_{ji}).\]
Now 
\[a^2k_{ii} + b^{2} k_{jj}-2ab k_{ij}\geq 0, \]
follows from the arithmetic mean and geometric mean inequality.
\end{proof}

\subsection{A perturbation result}
We now show that if $R$ is the resistance matrix of a connected graph with $n$ vertices,
and if $L$ is the Laplacian matrix of $\g$ with positive scalar weights, then
$(R^{-1}-L)^{-1}$ has all entries non-negative.
\begin{theorem}
Let $H$ be a simple (undirected) connected graph with $n$ vertices
and $R$ be the resistance matrix of $H$.
Assume that all the weights in $\g$ are positive scalars. 
Then, $R^{-1}-L$ is non-singular and every entry in
$(R^{-1}-L)^{-1}$ is non-negative.
\end{theorem}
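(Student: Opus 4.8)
The plan is to specialise everything to $s=1$ (so $U=\1$, $J=\1\1'$), write $L_H$ for the Laplacian of $H$, and use the Section~1.1 formula $R^{-1}=-\tfrac12L_H+\tfrac1{c}\tau\tau'$, where $\tau_i=2-\sum_{j\sim i}r_{ij}$ and $c:=\tau'R\tau>0$. Two scalar facts are the engine: Foster's theorem $\sum_{\{i,j\}\in E(H)}r_{ij}=n-1$ gives $\1'\tau=2n-2(n-1)=2$, and applying $R$ to $R^{-1}\1=\tfrac2c\tau$ gives $R\tau=\tfrac c2\1$. Put $P:=\tfrac12L_H+L$: this is exactly the Laplacian of the strongly connected, balanced, scalar‑weighted digraph obtained by superposing $H$ (with edge weights $2$) on $\g$, so $P\1=\1'P=0$, $P$ is positive semidefinite, $\nulls(P)=\nulls(P')=\span\{\1\}$, $\rank(P+P')=n-1$, and $P$ satisfies the hypotheses of Lemma~\ref{lapprop} and Proposition~\ref{rankL} (in particular $P^\dagger$ is almost positive definite). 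For non‑singularity, if $(R^{-1}-L)v=0$ then $Pv=\tfrac1c(\tau'v)\tau$; applying $\1'$ and using $\1'P=0,\ \1'\tau=2$ forces $\tau'v=0$, hence $Pv=0$. Then $0=v'Pv=\tfrac12v'L_Hv+v'Lv$ with both terms non‑negative gives $L_Hv=0$, so $v\in\span\{\1\}$; now $\tau'v=0$ yields $v=0$. Thus $N:=R^{-1}-L$ is invertible.

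To invert $N$, I would use $N\1=R^{-1}\1-L\1=\tfrac2c\tau$ and $\1'N=\tfrac2c\tau'$, which give $N^{-1}\tau=\tfrac c2\1$ and $\tau'N^{-1}=\tfrac c2\1'$. Writing $N=-P+\tfrac1c\tau\tau'$, the identity $NN^{-1}=I$ together with $\tau'N^{-1}=\tfrac c2\1'$ collapses to $PN^{-1}=\tfrac12\tau\1'-I$. The right side is consistent (its column sums vanish since $\1'(\tfrac12\tau\1'-I)=0$), so $N^{-1}=P^\dagger(\tfrac12\tau\1'-I)+\1g'$, and pinning the residual $\1$‑direction by $\tau'N^{-1}=\tfrac c2\1'$ determines $g$. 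Substituting $z_i:=e_i-\tfrac12\tau\in\1^\perp$ and simplifying gives the clean closed form
\[ (N^{-1})_{ij}=\frac c4-z_i'\,P^\dagger\,z_j . \]
Since this derivation used only $R^{-1}=-P+\tfrac1c\tau\tau'$ and the null/rank structure of $P$, applying it verbatim with $P$ replaced by $\tfrac12L_H$ (so $N=R^{-1}$, $N^{-1}=R$) and using $R_{ii}=0$ produces the auxiliary identity $2\,z_i'L_H^\dagger z_i=\tfrac c4$ for every $i$.

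Non‑negativity of the diagonal is then immediate. Because $P=\tfrac12L_H+L\succeq\tfrac12L_H$ as quadratic forms and both $P_s:=\tfrac12(P+P')=\tfrac12L_H+\tfrac12(L+L')$ and $\tfrac12L_H$ are symmetric with range $\1^\perp$, operator‑monotonicity of inversion gives $(P_s)^\dagger\preceq(\tfrac12L_H)^\dagger=2L_H^\dagger$ on $\1^\perp$; combined with the elementary fact that the symmetric part of an inverse is dominated by the inverse of the symmetric part, $\operatorname{sym}(P^\dagger)\preceq(P_s)^\dagger$ on $\1^\perp$. Hence for $z_i\in\1^\perp$,
\[ z_i'P^\dagger z_i=z_i'\operatorname{sym}(P^\dagger)z_i\le 2\,z_i'L_H^\dagger z_i=\tfrac c4, \]
so $(N^{-1})_{ii}\ge0$. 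For off‑diagonal entries, $\operatorname{sym}(P^\dagger)\succeq0$ is a genuine Gram matrix whose diagonal is $\le\tfrac c4$, so Cauchy–Schwarz bounds the symmetric part $z_i'\operatorname{sym}(P^\dagger)z_j\le\tfrac c4$; this already yields the symmetrised statement $(N^{-1})_{ij}+(N^{-1})_{ji}\ge0$.

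The main obstacle is the skew part. Since $L$ is not symmetric, $z_i'P^\dagger z_j$ carries an antisymmetric contribution $z_i'\operatorname{asym}(P^\dagger)z_j$ with no definite sign, and splitting the guaranteed bound $\tfrac12\bigl((N^{-1})_{ij}+(N^{-1})_{ji}\bigr)\ge0$ into its two summands is exactly what the antisymmetric term can spoil; it must be shown that it never pushes a single $z_i'P^\dagger z_j$ above $\tfrac c4$. I would attack this using Lemma~\ref{lapprop} applied to $P$, which controls the asymmetry of $P^\dagger$ through $p_{ii}\ge\max(p_{ij},p_{ji})$ (so both $p_{ii}+p_{jj}-2p_{ij}$ and $p_{ii}+p_{jj}-2p_{ji}$ are non‑negative), and feed this, together with the sign structure of $\tau$ and the weighted identities $N^{-1}\tau=\tfrac c2\1$ and $\tau'N^{-1}=\tfrac c2\1'$, into converting the global control of $\tau$‑weighted rows and columns of $N^{-1}$ into the required entrywise bound $z_i'P^\dagger z_j\le\tfrac c4$. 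This last conversion is where the digraph (non‑symmetric) case genuinely departs from the symmetric argument and is the crux of the proof.
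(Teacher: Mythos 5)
Much of your proposal is correct, and it is a genuinely different route from the paper's. Your non-singularity argument works; your closed form also checks out: with $P=\tfrac12L_H+L$, $N:=R^{-1}-L=-P+\tfrac1c\tau\tau'$, $Z:=I-\tfrac12\tau\1'$, one verifies directly (using $P\1=0$, $\1'P=0$, $\1'\tau=2$, $PP^\dag=I-\tfrac1n\1\1'$) that
\[
N\Bigl(\tfrac{c}{4}\,\1\1'-Z'P^\dag Z\Bigr)=I,
\qquad\text{so}\qquad (N^{-1})_{ij}=\tfrac{c}{4}-z_i'P^\dag z_j,
\]
and your auxiliary identity $2z_i'L_H^\dag z_i=\tfrac c4$ is consistent with Lemma~\ref{3}(vi) for $a=b=1$. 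The operator anti-monotonicity step and the Cauchy--Schwarz step are also sound, so you do obtain $(N^{-1})_{ii}\ge 0$ and the symmetrized inequality $(N^{-1})_{ij}+(N^{-1})_{ji}\ge 0$. However, the theorem asserts that \emph{each individual entry} is non-negative, and that is exactly the step you do not prove: the skew contribution $z_i'\,\tfrac12(P^\dag-(P^\dag)')\,z_j$ has no sign, and your text explicitly defers it (``I would attack this using Lemma~\ref{lapprop}\dots this is the crux''). A plan is not a proof, so this is a genuine gap, and not a routine one: Lemma~\ref{lapprop} gives only the entrywise facts $p_{ii}\ge p_{ij}$ and $p_{ii}\ge p_{ji}$ for $P^\dag=[p_{ij}]$, whereas you must bound the bilinear form $z_i'P^\dag z_j$ with $z_i=e_i-\tfrac12\tau$; since $\tau$ generally has entries of both signs (for non-tree $H$, $\tau_i=2-\sum_{j\sim i}r_{ij}$ can be negative), entrywise inequalities on $P^\dag$ do not transfer to the form, and no conversion is exhibited.

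It is worth seeing how the paper gets around precisely this obstruction, because it uses a mechanism absent from your proposal: a homotopy in a scalar parameter combined with the Jacobi identity. The paper shows (Claim 1) that $\beta L-R^{-1}$ is non-singular for \emph{every} $\beta\ge0$, (Claim 2) that every proper principal submatrix of $\beta L-R^{-1}$ has positive definite quadratic form, and (Claim 3) that $\det(\beta L-R^{-1})<0$; the Jacobi identity then forces \emph{all} principal minors of $(\beta L-R^{-1})^{-1}$, of every order, to be negative (Claim 4). Finally, each off-diagonal entry $y_{ij}(\beta)$ of $(\beta L-R^{-1})^{-1}$ starts at $y_{ij}(0)=-r_{ij}\le0$, and if it ever became positive it would vanish at some $\delta>0$ by continuity, making the $2\times2$ principal minor $y_{ii}(\delta)y_{jj}(\delta)-y_{ij}(\delta)y_{ji}(\delta)=y_{ii}(\delta)y_{jj}(\delta)>0$, contradicting Claim 4. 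This sign-tracking along the path $\beta\mapsto\beta L-R^{-1}$ is what controls single entries of a non-symmetric inverse, where quadratic-form arguments like yours inherently see only the symmetric part. To complete your write-up you would need either to supply the missing conversion from Lemma~\ref{lapprop} to the bound $z_i'P^\dag z_j\le\tfrac c4$, or to import an argument of the paper's homotopy/Jacobi type.
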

\begin{proof}
Let $M=[m_{ij}]$ be the Moore-Penrose inverse of the Laplacian matrix of $H$. 
Then the $(i,j)^{\rm th}$ entry $r_{ij}$ of $R$ is given by
\[r_{ij}=m_{ii} + m_{jj}-2m_{ij}. \]
Fix $\a \geq 0$. Define $S:=\a L$.  
We complete the proof by using the following claims.

{\bf Claim 1:} $R^{-1}-S$ is non-singular.\\
To prove this claim, we can assume that $S=\a L$, where $\a>0$.
By Proposition \ref{rankL}, $\rank(S)=n-1$, $S+S'$ is positive semidefinite
and $S' \1=S \1=0$.
Let $x \in \rr^n$ be such that 
\begin{equation} \label{rinverse}
(R^{-1}-S)(x)=0.
\end{equation}
Put $u:=R^{-1}x$.
Assuming $u\neq 0$, 
we now get a contradiction.
As $\1^{'} S=0$, it follows that
$\1' R^{-1}x=0$ and hence
 $u \in \1^\perp$. Writing
\[R=\Diag(M) \1\1' + \1\1' \Diag(M)-2 M, \]
we get
\begin{equation*}
\begin{aligned}
u'Ru &=u'(\Diag(M)\1\1' + \1 \1' \Diag(M)-2M)u \\
&=-2 u'Mu.
\end{aligned}
\end{equation*}
Since $\nulls(M)=\span\{\1\}$, $M$ is positive definite on $\1^{\perp}$. So,
$u'Mu>0$. Hence,
\begin{equation} \label{u'Runegative} 
 u'Ru<0.
 \end{equation}
 It is easy to see that
\begin{equation} \label{u'Ru=x'Rx}
u'Ru=x'R^{-1}x
\end{equation}
By $(\ref{rinverse})$, 
\[x'R^{-1}x=x' S x.\] Since $S+S'$ is positive semidefinite,
$x' Sx \geq 0$. So,
$x'R^{-1}x \geq 0$. Hence by (\ref{u'Ru=x'Rx}),
\begin{equation} \label{u'Rupositive}
u' R u \geq 0.
\end{equation}
Thus, we get a contradiction from
(\ref{u'Runegative}) and (\ref{u'Rupositive}).
 Therefore, $u=R^{-1}x=0$. This implies $x=0$.
So, $R^{-1}-S$ is non-singular.
The claim is proved.

{\bf Claim 2:} If $C$ is a $k \times k$ proper principal submatrix of $S-R^{-1}$,
then \[q' C q >0~~ \mbox{for all} ~0 \neq q \in \rr^{k}. \]
If $A$ is an $n \times n$ matrix, we shall use the notation
$A[i]$ to denote the principal submatrix of $A$ obtained by deleting the
$i^{\rm th}$ row and $i^{\rm th}$ column of $A$.
Fix $1 \leq i \leq n$ and 
define \[B:=S[i] - R^{-1}[i].\]
Since $R$ is negative definite on $\1^{\perp}$
and the diagonal entries are zero, $R$ has exactly one positive eigenvalue.
By an application of interlacing theorem, we see that  
$-R^{-1}[i]$ is positive semidefinite. 
Hence
\begin{equation} \label{-rinversei}
-p'R^{-1} [i]p \geq 0~~\mbox{for all}~w \in \rr^{n-1}.
\end{equation}
As the row sums and the column sums of $S$ are equal to zero, it follows that
all the cofactors of $S$ are equal. Because $S+S'$ is positive semidefinite and
has rank $n-1$, it follows that every proper principal submatrix of $S+S'$ is positive
definite. So, we have 
\begin{equation} \label{p'Li}
p' S [i] p>0 ~~\mbox{for all} ~0 \neq p \in \rr^{n-1}.
\end{equation}
 By (\ref{-rinversei}) and (\ref{p'Li}),
\[p'B p > 0~~\mbox{for all}~~0 \neq p \in \rr^{n-1}.\]
The claim is proved.

In particular, we note that all principal minors of 
$S-R^{-1}$ with order less than $n$ are positive.

{\bf Claim 3:} $\det(S-R^{-1})<0$.\\
Because $\gamma L -R^{-1}$ is non-singular for every $\gamma \geq 0$, 
\[\mbox{sgn}~\det(\gamma L-R^{-1})=\mbox{sgn}~\det(-R^{-1}). \]
Since $-R$ has exactly one negative eigenvalue,
$\det(-R^{-1})<0$.
 So,
\[\det(\gamma L-R^{-1}) <0~~\forall \gamma \geq 0. \]
This proves the claim.

{\bf Claim 4:}
All principal minors of $(S-R^{-1})^{-1}$
are negative. \\
Put \[G:=(S-R^{-1})^{-1}~~\mbox{and}~~H:=S-R^{-1}. \]
Let 
$s < n$, and let
$\widehat{G}$ be a $s \times s$ principal submatrix of $G$.
Suppose $\widehat{H}$
is the complementary submatrix of 
$\widehat{G}$ in $H$.
By Jacobi identity,
\[\det (\widehat{G})=\frac{\det(\widehat{H})}{\det(H)}. \] 
By claim $2$ and $3$, 
\[\mbox{sgn}(\det \widehat{H}) >0~~\mbox{and}~~\mbox{sgn} (\det (H))<0. \]
So,
\(\det(\widehat{G})<0\).
The claim is proved.

We now complete the proof of the theorem.
Given $\beta \geq 0$, let
\[
\left[
\begin{array}{cccc}
y_{ii}(\b) & y_{ij}(\b) \\
y_{ji}(\b) & y_{jj}(\b)
\end{array}
\right]
\]
denote a $2 \times 2$ principal submatrix of $(\b L-R^{-1})^{-1}$. 
By claim $4$, 
\begin{equation} \label{yiibeta}
y_{ii}(\b)<0~~\mbox{and}~~y_{jj}(\b)<0~~\mbox{for all}~\b \geq 0.
\end{equation}
We now show that
$y_{ij}(\b)<0$ for all $\b \geq 0$.
Since $y_{ij}(0)=-r_{ij}$, $y_{ij}(0) <0$.
If $y_{ij}(\a) > 0$ for some $\a>0$, then by continuity, $y_{ij}(\delta)=0$
for some $\delta >0$.
Hence by $(\ref{yiibeta})$,
\[\det(
\left[
\begin{array}{cccc}
y_{ii}(\delta) & y_{ij}(\delta) \\
y_{ji}(\delta) & y_{jj}(\delta) \\
\end{array}
\right])=y_{ii}(\delta) y_{jj}(\delta)>0.
\]
However by claim $4$, 
\[\det(
\left[
\begin{array}{cccc}
y_{ii}(\delta) & y_{ij}(\delta) \\
y_{ji}(\delta) & y_{jj}(\delta) \\
\end{array}
\right])<0.
\]
Thus, we have a contradiction. So, $y_{ij}(\alpha)\leq 0$ for all $\alpha \geq 0$.
We now conclude that every entry in $(L-R^{-1})^{-1}$ is negative. The proof is complete.
\end{proof}

We illustrate the above result with an example.
\begin{example}\rm
Consider the graphs $H$ and $\g$ given in Figure \ref{pertexmm}.
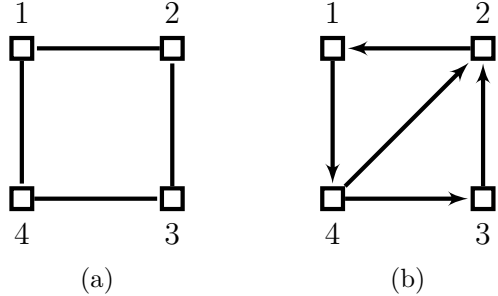
\begin{figure}[tbhp]
\centering
\subfloat[]{\begin{tikzpicture}[shorten >=1pt, auto, node distance=3cm, ultra thick,
   node_style/.style={circle,draw=black,fill=white !20!,font=\sffamily\Large\bfseries},
   edge_style/.style={draw=black, ultra thick}]
\node[label=above:$1$,draw] (1) at  (0,0) {};
\node[label=above:$2$,draw] (2) at  (2,0) {};
\node[label=below:$3$,draw] (3) at  (2,-2) {};
\node[label=below:$4$,draw] (4) at  (0,-2) {};
\draw  (1) to (4);
\draw  (4) to (3);
\draw  (3) to (2);
\draw  (2) to (1); 
\end{tikzpicture}}
~~~~~~~~~~~~\subfloat[]{\begin{tikzpicture}[shorten >=1pt, auto, node distance=3cm, ultra thick,
   node_style/.style={circle,draw=black,fill=white !20!,font=\sffamily\Large\bfseries},
   edge_style/.style={draw=black, ultra thick}]
\node[label=above:$1$,draw] (1) at  (0,0) {};
\node[label=above:$2$,draw] (2) at  (2,0) {};
\node[label=below:$3$,draw] (3) at  (2,-2) {};
\node[label=below:$4$,draw] (4) at  (0,-2) {};
\draw[edge]  (1) to (4);
\draw[edge]  (4) to (3);
\draw[edge]  (3) to (2);
\draw[edge]  (4) to (2);
\draw[edge]  (2) to (1); 
\end{tikzpicture}}
\caption{(a) Graph $H$, and (b) Graph $\g$.}\label{pertexmm}
\end{figure} 
Let the positive scalar weights $w_{ij}$ assigned to each edge $(i,j)$ of $\g$ be
\[w_{14} = w_{21} = \frac{10}{7},~w_{32}=w_{43}=5~\mbox{and}~ w_{42}=2 .\]
The Laplacian matrix of $\g$ is 
\[
L = \frac{1}{10}\left[\begin{array}{rrrr}
     7 & 0 & 0 & -7 \\
-7 & 7 & 0 & 0 \\
0 & -2 & 2 & 0 \\
0 & -5 & -2 & 7
\end{array}\right].\]
The resistance matrix $R$ of $H$ is
\[R = \frac{1}{4}\left[\begin{array}{rrrr}
     0 & 3 & 4 & 3 \\
3 & 0 & 3 & 4 \\
4 & 3 & 0 & 3 \\
3 & 4 & 3 & 0
\end{array}\right].\]
Now,
\[(R^{-1}-L)^{-1} = \frac{1}{8612}\left[\begin{array}{rrrr}
   2335 & 6555 & 7515 & 5125 \\
5125 & 2585 & 6905 & 6915 \\
7515 & 5975 & 1135 & 6905 \\
6555 & 6415 & 5975 & 2585
\end{array}\right],\]
which is a non-negative matrix.
\end{example}

\section*{Funding}
The second author acknowledges the support of the Indian National Science Academy under the INSA Senior Scientist scheme.

\end{document}